\newcommand{\intL}{\int\limits}
\newcommand{\half}{^\infty_0 }
\newcommand{\intR}{\int\limits_{\mathbb{R}} }
\newcommand{\intRR}{\int\limits_{\mathbb{R}^2} }
\newcommand{\RR}{\mathbb{R}}
\newcommand{\vx}{\vec{x}}
\newcommand{\uualpha}{\hat{\alpha}} 
\newcommand{\uutheta}{\hat{\theta}} 
\newcommand{\uuomega}{\hat{\omega}} 
\newtheorem{thm}{Theorem}
\newtheorem{defi}[thm]{Definition}
\newtheorem{cor}[thm]{Corollary}
\newtheorem{lem}[thm]{Lemma}
\title{A Radon-type transform arising in Photoacoustic Tomography with circular detectors: spherical geometry}
\author{Yulia Hristova${}^a$ , Sunghwan Moon${}^b$ $^{\ast}$\thanks{$^\ast$Corresponding author. Email: shmoon@unist.ac.kr
\vspace{6pt}}, and Dustin Steinhauer${}^c$ 
\\\vspace{6pt}  $^{a}${\em{Department of Mathematics and Statistics,\\
 University of Michigan-Dearborn, Dearborn, MI, 48128}};\\
$^{b}${\em{Department of Mathematical Sciences,\\
Ulsan National Institute of Science and Technology,
Ulsan 689-798, Republic of Korea}};\\
$^{c}${\em{Department of Mathematics,\\ Texas A\&M University, 
College Station, TX 77840-3368}} }
\begin{document}
\maketitle
 \begin{abstract}
 This paper is devoted to a Radon-type transform arising in a version of Photoacoustic Tomography that uses integrating circular detectors. 
We show that the transform can be decomposed into the spherical Radon transform and the Funk-Minkowski transform.

An inversion formula and a range description are obtained by combining existing inversion formulas and range descriptions for the spherical Radon transform and the Funk-Minkowski transform.
Numerical simulations are performed to demonstrate our proposed algorithm.
 \end{abstract}
 \begin{keywords}
Radon transform, tomography, photoacoustic, thermoacoustic, integrating detectors
\end{keywords}

\begin{classcode}44A12;65R10;92C55;35L05\end{classcode}
\section{Introduction}
Hybrid biomedical imaging modalities have been an active area of research lately, combining different physical signals in order to utilize their advantages to enhance images.
Photoacoustic Tomography (PAT), which incorporates ultrasound and optical or radio-frequency electomagnetic waves, is one of the most successful examples of such a combination.
While pure ultrasound imaging typically leads to high resolution images, the contrast between cancerous and healthy tissue is rather low.
 On the other hand, optical or radio-frequency electromagnetic imaging can provide a significant contrast; however, it has low resolution. 
The photoacoustic effect, discovered by A.G. Bell~\cite{bell80}, allows one to take advantage of the strengths of the pure optical and ultrasound imaging, and it forms the basis of PAT. 

In PAT, an object of interest is irradiated by pulsed non-ionizing electromagnetic energy. Due to the photoacoustic effect, an acoustic wave dependent on the electromagnetic absorption properties of the object is generated~\cite{kuchmentk08,xuw06}. 
This wave is then measured by ultrasound detectors placed outside the object. 
The internal photoacoutic sources are reconstructed from the measurements to produce a 3-dimensional image (tomogram). 
Irradiated cancerous cells, in particular, are displayed with high contrast. This is due to the fact that such cells absorb several times more electromagnetic energy than healthy tissue, and electromagnetic deposition is proportional to the strength of the generated acoustic wave. Since the initially generated acoustic wave contains diagnostic information, one of the classical mathematical problems of PAT is the recovery of this initial acoustic pressure field from ultrasound measurements made outside the object. 

The initial approach to detecting ultrasound signals in PAT has been to use small piezoelectric transducers. As these transducers mimic point-like measurements, reconstruction algorithms produce images with a spatial resolution limited by the size of the transducers. 
Another drawback of such detectors is the difficulty manufacturing small transducers with high sensitivity. 
To overcome these deficiencies, various other types of acoustic detectors have been introduced, e.g. linear, planar, cylindrical and circular detectors~~\cite{burgholzerbmghp07,grattpnp11,haltmeier09,haltmeiersbp04,mooncrt13,zangerls09}. These detectors are modeled as measuring the integrals of the pressure over the shape of the detector.

There are works discussing PAT with circular integrating detectors~\cite{moontat14,smoon,zangerls10,zangerlsh09,zangerls09}.
Some works~\cite{moontat14,smoon,zangerlsh09,zangerls09} have considered PAT with circular integrating detectors on cylindrical geometry: a stack of parallel circular detectors whose centers lie on a cylinder. 
It was shown that the data from PAT with circular detectors is the solution of a certain initial value problem, and this fact was used to reduce the problem of recovery of the initial pressure to that of inversion of the circular Radon transform in~\cite{zangerls10,zangerlsh09,zangerls09}.
The cases where the centers of the detector circles are located on a plane or a sphere have been discussed in~\cite{moontat14}.
In the present work we study PAT with circular integrating detectors in a spherical geometry, which can be useful in investigating small objects~\cite{zangerls10}.
We note that a type of spherical geometry was also discussed in~\cite{zangerls10}. However, the geometry considered here is different, as only detectors with the same radius are required, while various sizes of circular detectors are needed in~\cite{zangerls10}.
Also, our approach is to define a Radon-type transform arising in this version of PAT, and we show that this transform can be decomposed into the spherical Radon transform and the Minkowski-Funk transform, both of which are well studied.
  
This paper is organized as follows.
Section~\ref{defiandwork} is devoted to the construction of a Radon-type transform arising in PAT with circular integrating detectors. 
In Section~\ref{recon}, we show that the Radon-type transform is the composition of the Minkowski-Funk transform and the spherical Radon transform with centers on a sphere, and we present the inversion formula using this fact.
In Section~\ref{range}, we describe the range of this Radon-type transform, and we give necessary and sufficient conditions for a function to arise as data measured by the circular detectors in the spherical geometry.
Section~\ref{S:numerical} is concerned with numerical implementation.
Section~\ref{S:conclusions} contains some final remarks.
The appendices provide proofs of some technical statements.
\section{Photoacoustic tomography with circular integrating detectors}\label{defiandwork}
In PAT, the acoustic pressure $p(\vx,t)$ satisfies the following initial value problem:
\begin{equation}\label{eq:pdeofpat}
\begin{array}{lll}
 &\partial^2_t\, p(\vx,t)=\Delta_{\vx}\, p(\vx,t)\qquad&(\vx,t)\in\RR^3\times(0,\infty),\\
&p(\vx,0)=f(\vx)\qquad&\vx=(x_1,x_2,x_3)\in\RR^3,\\
&\partial_t \,p(\vx,0)=0 \qquad&\vx\in\RR^3.
\end{array}
\end{equation}
Here we assume that the sound speed is equal to $1$ everywhere, including the interior of the object. The goal of PAT is to recover the initial pressure $f$ from measurements of $p$ outside the support of $f$.

Throughout this article, it is assumed that the initial pressure field $f$ is smooth and supported in $B(0,r_{det})$ - the ball in $\RR^3$ with radius $r_{det}$ centered at the origin.
We also assume that the acoustic signals are measured by a set of circular detectors centered at the origin with radius $r_{det}$ - that is, the detector circles are great circles on the sphere $\partial B(0,r_{det})$ (see Figure~\ref{fig:tatsphere}). 
\begin{figure}%
\begin{center}
\includegraphics[width=0.45\textwidth]{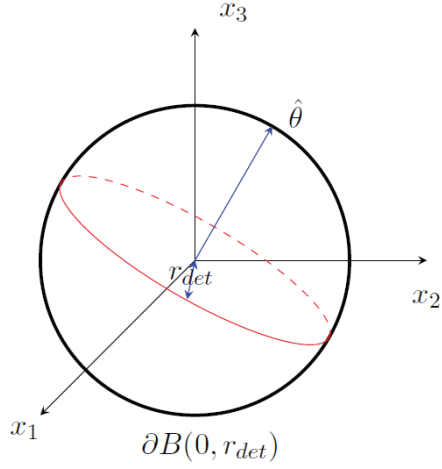}%
\label{fig:tatsphere}
\caption{The great circle on the sphere $\partial B(0,r_{det})$ of radius $r_{det}$ in the plane perpendicular to $\uutheta$.}
\end{center}
\end{figure}
%

The data $P(\uutheta,t)$, for $(\uutheta,t)\in S^2\times(0,\infty)$ measured by the circular detector lying in the plane  perpendicular to $\uutheta$ (see Figure~\ref{fig:tatsphere}) can be expressed as
$$
P(\uutheta,t)=\frac{1}{2\pi}\intL_{\uutheta\cdot\uualpha=0}p(r_{det}\uualpha,t)dS(\uualpha),
$$
where $dS$ is the measure on the unit circle $S^1$. 

Everywhere in the text we will use the hat notation to indicate a unit vector in 2D or 3D, and $dS$ to denote the measure either on the circle or on the sphere. The dimension will be clear from the context.

Using Kirchhoff's formula for the solution of ~\eqref{eq:pdeofpat} (e.g. \cite{EvansBook}):
$$
p(\vx,t)=\partial_t\left(\frac{1}{4\pi t}\intL_{\partial B (\vx,t)}f(\hat\beta)dS(\hat\beta)\right),
$$
we can represent the measurements $P(\uutheta,t)$ as follows:
\begin{equation}\label{eq:definitionofP}
\begin{array}{ll}
P(\uutheta,t)&\displaystyle=\frac{1}{2\pi}\intL_{\uutheta\cdot\uualpha=0}\partial_t\left(\frac{1}{4\pi t}\intL_{\partial B (r_{det}\uualpha,t)}f(\hat\beta)dS(\hat\beta) \right)d S(\uualpha)\\
&\displaystyle=
\frac{1}{8\pi^2}\partial_t\left(t\intL_{\uutheta\cdot\uualpha=0} \intL_{S^2}
f(r_{det}\uualpha+t\hat{\beta}) \,dS(\hat{\beta}) dS(\uualpha)\right),
\end{array}
\end{equation}
where $S^2$ is the unit sphere in $\RR^3$.
Let us define the new Radon-type transform 
$$
\displaystyle \mathcal{R}_Pf(\uutheta,t)=\intL_{\uutheta\cdot\uualpha=0} \intL_{S^2}
f(r_{det}\uualpha+t\hat{\beta}) dS(\hat{\beta}) dS(\uualpha).
$$
Then, \eqref{eq:definitionofP} reads as
\begin{equation}
P(\uutheta,t) = \frac{1}{8 \pi^2}\partial_t \left( t\mathcal{R}_P f(\uutheta,t)\right).
\label{eq:PandR}
\end{equation}

If $f$ is odd, i.e, $f(\vx)=-f(-\vx)$, then $\mathcal{R}_Pf$ is equal to zero. We thus assume that $f$ is even. This entails no loss of generality, as any function compactly supported in the upper hemisphere of  $B (0,r_{det})$ can be extended to an even function with support in $B (0,r_{det})$.  
In what follows, $\mathcal{R}_P$ is seen to be the composition of the spherical Radon transform with centers on $\partial B (0,r_{det})$ and the Minkowski-Funk transform. 
This observation will allow us to recover $f$ from the measurements $P$ as well as to describe necessary and sufficient conditions for a given function to arise as data measured by the circular integrating detectors.

\section{Reconstruction}\label{recon}
The following definitions will be used in the rest of this paper. 

\begin{defi}\indent
 The Minkowski-Funk transform $F$ maps a locally integrable function $\phi$ defined on $S^2\times[0,\infty)$ such that  $\phi(\uualpha,t)=\phi(-\uualpha,t)$ into
$$
F\phi(\uutheta,t)=\intL_{\uutheta\cdot\uualpha=0}\phi(\uualpha,t)dS(\uualpha), \qquad\mbox{ for  }(\uutheta,t)\in S^2\times[0,\infty).
$$
\end{defi}
\begin{defi}\indent
 The spherical Radon transform $R_S$ maps a locally integrable function $f$ on $\RR^3$ into
$$
R_Sf(\uualpha,t)=\intL_{S^2}f(r_{det}\uualpha+t\hat{\beta})dS(\hat{\beta}),\qquad\mbox{ for  } (\uualpha,t)\in S^2\times[0,\infty).
$$
\end{defi}
Now it is easily seen that the following representation of $\mathcal{R}_P f$ holds.
\begin{thm}\label{thm:R_P}
For any $f\in C^\infty(\RR^3)$ compactly supported in $B(0,r_{det})$ with $f(\vec x)=f(-\vec x)$, we have $\mathcal{R}_Pf(\uutheta,t)=F(R_Sf)(\uutheta,t)$.
\end{thm}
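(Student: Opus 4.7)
The plan is to verify the identity by direct unwinding of the two definitions, with the only substantive point being to check the symmetry hypothesis needed to apply the Minkowski--Funk transform.

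First, I would factor the double integral defining $\mathcal{R}_Pf(\uutheta,t)$ into an iterated integral, treating the $\hat{\beta}$-integration as the inner integral. The inner integral is exactly
\[
\intL_{S^2} f(r_{det}\uualpha + t\hat{\beta})\,dS(\hat{\beta}) = R_S f(\uualpha,t),
\]
by the definition of the spherical Radon transform. This rewrites $\mathcal{R}_Pf(\uutheta,t)$ as $\intL_{\uutheta\cdot\uualpha=0} R_S f(\uualpha,t)\, dS(\uualpha)$, which is the formal expression for $F(R_Sf)(\uutheta,t)$.

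Second, and this is the one real thing to check, I would verify that $R_Sf(\,\cdot\,,t)$ satisfies the antipodal symmetry $R_Sf(\uualpha,t) = R_Sf(-\uualpha,t)$ required by the definition of $F$. This comes from the assumed evenness $f(\vx) = f(-\vx)$: substituting $-\uualpha$ for $\uualpha$ and then changing variables $\hat{\beta}\mapsto -\hat{\beta}$ on the unit sphere (which preserves $dS$) gives
\[
R_S f(-\uualpha,t) = \intL_{S^2} f\bigl(-(r_{det}\uualpha + t\hat{\beta})\bigr)\,dS(\hat{\beta}) = \intL_{S^2} f(r_{det}\uualpha + t\hat{\beta})\,dS(\hat{\beta}) = R_S f(\uualpha,t).
\]
The smoothness and compact support of $f$ guarantee that all the integrals converge absolutely and Fubini is applicable, so interchanging the order of integration is legitimate.

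The only potential obstacle is this evenness verification; once it is in place, the theorem is just the observation that the defining double integral of $\mathcal{R}_P$ is literally the composition $F\circ R_S$. I would end the proof by combining the two computations above to conclude $\mathcal{R}_Pf(\uutheta,t) = F(R_Sf)(\uutheta,t)$.
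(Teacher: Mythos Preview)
Your proposal is correct and is essentially the same approach as the paper's, which simply asserts that the identity ``is easily seen'' from the definitions and gives no further argument. In fact you supply more detail than the paper does, including the evenness verification for $R_Sf(\cdot,t)$, which the paper leaves implicit.
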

Thus, in order to reconstruct the initial pressure $f$, we will employ known inversion formulas for the Minkowski-Funk and the spherical Radon transforms. A number of inversion formulas for these transforms have been derived, e.g. some of those for the spherical Radon transform can be found in~\cite{finchr09,finchpr04,kunyansky07,kunyansky071,Nguyen_FamilyInversionTAT} and those for the Minkowski-Funk transform are studied in~\cite{gelfandgg03,gindikinrs93,helgason99radon,nattererw01}.  
We choose the inversion formula for the spherical Radon transform given in~\cite{finchr09,finchpr04} and present the relation between the Minkowski-Funk transform and the regular Radon transform in the following theorem:
\begin{thm}\label{thm:funkradon}
Let $\phi$ be a continuous and even function on $S^2$, i.e., $\phi(\uutheta)=\phi(-\uutheta)$.
If 
$$
F\phi(\uutheta)=\intL_{\uutheta\cdot\uualpha=0}\phi(\uualpha)dS(\uualpha), \qquad\mbox{ for  } \uutheta\in S^2,
$$
then, if $\uutheta\neq(0,0,1)$ and $\uutheta\neq(0,0,-1)$, the following relation holds,
\begin{equation*}\label{eq:relationbetweenradon}
F\phi(\uutheta)=\frac{1}{|\theta'|}R\Phi\left(\frac{\theta'}{|\theta'|},-\frac{\theta_3}{|\theta'|}\right),
\end{equation*}
where $\uutheta=(\theta',\theta_3)=(\theta_1,\theta_2,\theta_3)\in S^2, \uualpha = (\alpha',\alpha_3)\in S^2$, $\Phi(\alpha'/\alpha_3)=2\phi(\uualpha)\alpha_3^2$, and  
$$
R[\Phi(x_1,x_2)](\uuomega,s)=\intR \Phi(s\uuomega+\nu \uuomega^\perp)d\nu,\quad\mbox{ for  } (\uuomega,s)\in S^1\times\RR.
$$
\end{thm}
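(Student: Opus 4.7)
The central idea is that $\Phi$ is a gnomonic projection of $\phi$: the map $\uualpha\mapsto y=\alpha'/\alpha_3$ is the projection from the origin onto the plane $\alpha_3=1$, and under this projection great circles on $S^2$ correspond to straight lines in $\RR^2$. So one expects the Funk integral over a great circle to pull back to a Radon integral over a line, with a Jacobian that is precisely cancelled by the factor $2\alpha_3^2$ in the definition of $\Phi$.

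First, I would identify the line of integration. Write $\uuomega=\theta'/|\theta'|$ and $s=-\theta_3/|\theta'|$, and parametrize
$$y(\nu)=s\uuomega+\nu\uuomega^\perp,$$
so that $R\Phi(\uuomega,s)=\int_\RR \Phi(y(\nu))\,d\nu$ is an integral over the line $\theta'\cdot y=-\theta_3$ in $\RR^2$. Setting $\uualpha=(y,1)/\sqrt{1+|y|^2}$ gives a point with $\alpha_3>0$ on $S^2$, and the identity $\theta'\cdot y=-\theta_3$ is exactly $\uutheta\cdot\uualpha=0$. Thus as $\nu$ sweeps $\RR$, $\uualpha(\nu)$ sweeps the upper half of the great circle $\{\uualpha\in S^2:\uutheta\cdot\uualpha=0\}$.

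Next, I would compute the arc-length Jacobian. Using $y\cdot y'=\nu$ and $|y|^2=s^2+\nu^2$ on the line, a direct differentiation of $\uualpha(\nu)$ yields
$$\left|\frac{d\uualpha}{d\nu}\right|^2=\frac{|y'|^2(1+|y|^2)-(y\cdot y')^2}{(1+|y|^2)^2}=\frac{1+s^2}{(1+s^2+\nu^2)^2}=(1+s^2)\alpha_3^4,$$
so that $dS(\uualpha)=\sqrt{1+s^2}\,\alpha_3^2\,d\nu$. Substituting the definition $\Phi(y)=2\phi(\uualpha)\alpha_3^2$ into the Radon integral converts $\alpha_3^2\,d\nu$ into $dS(\uualpha)/\sqrt{1+s^2}$, giving
$$R\Phi(\uuomega,s)=\frac{2}{\sqrt{1+s^2}}\int_{\text{upper half of great circle}}\phi(\uualpha)\,dS(\uualpha).$$
Using the evenness $\phi(\uualpha)=\phi(-\uualpha)$, the upper half-circle integral equals half of the full great-circle integral, cancelling the factor $2$.

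Finally, since $\uutheta\in S^2$, we have $1+s^2=1+\theta_3^2/|\theta'|^2=1/|\theta'|^2$, so $\sqrt{1+s^2}=1/|\theta'|$, yielding $R\Phi(\uuomega,s)=|\theta'|F\phi(\uutheta)$, which rearranges to the claimed identity. The only delicate step is the arc-length computation; once one sees the nice cancellations $y\cdot y'=\nu$ and the collapse of the numerator to $1+s^2$, the rest is algebra. The hypothesis $\uutheta\neq(0,0,\pm 1)$ is exactly what ensures $|\theta'|\neq 0$, so that the gnomonic projection of the great circle is a genuine line rather than a circle at infinity.
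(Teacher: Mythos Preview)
Your argument is correct. Both your proof and the paper's rest on the same geometric fact---the gnomonic projection $\uualpha\mapsto\alpha'/\alpha_3$ sends great circles to straight lines---but you run the change of variables in opposite directions. The paper parametrizes the great circle trigonometrically as $\hat u\cos t+\hat v\sin t$ (with explicit orthonormal $\hat u,\hat v\perp\uutheta$), uses evenness to restrict to $t\in[0,\pi]$, and then substitutes $\cot t\mapsto t$; this substitution is precisely the gnomonic projection in disguise, and the Jacobian $dt/(v_3^2\sin^2 t)$ supplies the factor $\alpha_3^{-2}$ that your arc-length computation produces. Your route, starting from the line $y(\nu)=s\uuomega+\nu\uuomega^\perp$ and pulling back via $\uualpha=(y,1)/\sqrt{1+|y|^2}$, is cleaner conceptually: it makes transparent \emph{why} the weight $2\alpha_3^2$ in the definition of $\Phi$ is exactly right, and it avoids having to identify $v'/|v'|$, $|v'|/v_3$, and $|v_3|^{-1}$ with $\theta'/|\theta'|$, $-\theta_3/|\theta'|$, and $1/|\theta'|$ at the end. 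The paper's version, on the other hand, never needs the explicit arc-length formula $|d\uualpha/d\nu|^2=(1+s^2)\alpha_3^4$, since the trigonometric substitution handles the Jacobian automatically.
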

The proof of this Theorem is provided in Appendix A. 
Our main result is given in the theorem below.
\begin{thm}\label{thm:inversion}
Let $f\in C^\infty(\RR^3)$ be an even function with compact support in $B(0,r_{det})$.
Then, if $G(\uuomega,s,t)=(1+s^2)^{-\frac12}\mathcal R_pf\left(\uuomega/\sqrt{1+s^2},-s/\sqrt{1+s^2},t\right)$, the following relation holds:
\begin{equation}\label{eq:inversion}
\begin{split}
 f(\vx) = -\frac{r_{det}}{16\pi^2}\intL_{S^2}\left(\intL_{S^1}p.v. \intR  \frac{ \partial_s\partial_t t \partial_t t G(\uuomega,s,t)|_{t=|r_{det}\uualpha-\vx|}}
{\alpha_3\,\uuomega\cdot\alpha'-s\alpha_3^2}\; \frac{ds\, dS(\uuomega)}{|r_{det}\uualpha-\vx|}
\right)dS(\uualpha).
\end{split}
\end{equation}
\end{thm}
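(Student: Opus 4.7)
The plan is to invert $\mathcal{R}_P=F\circ R_S$ (Theorem~\ref{thm:R_P}) in two stages: first strip off the Funk--Minkowski factor $F$ using Theorem~\ref{thm:funkradon} together with the standard 2D Radon inversion, and then invert the spherical Radon transform $R_S$ with centres on $\partial B(0,r_{det})$ by appeal to the Finch--Patch--Rakesh inversion formula \cite{finchpr04,finchr09}.

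For each fixed $t>0$, set $\phi_t(\uualpha):=R_Sf(\uualpha,t)$; smoothness, compact support, and evenness of $f$ guarantee that $\phi_t$ is smooth and even on $S^2$. Theorem~\ref{thm:funkradon} with $\Phi_t(\alpha'/\alpha_3):=2\phi_t(\uualpha)\alpha_3^2$ then gives $F\phi_t(\uutheta)=|\theta'|^{-1}R\Phi_t(\theta'/|\theta'|,-\theta_3/|\theta'|)$. Parametrising $\uutheta=(\uuomega/\sqrt{1+s^2},-s/\sqrt{1+s^2})$ for $(\uuomega,s)\in S^1\times\RR$ yields $|\theta'|=(1+s^2)^{-1/2}$, $\theta'/|\theta'|=\uuomega$, and $-\theta_3/|\theta'|=s$; combined with Theorem~\ref{thm:R_P}, the definition of $G$ in the statement reduces to the clean identification $G(\uuomega,s,t)=R\Phi_t(\uuomega,s)$.

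I then invoke the classical filtered back-projection inversion of the 2D Radon transform,
$$
\Phi_t(y)=\frac{1}{4\pi^2}\intL_{S^1}p.v.\intR\frac{\partial_s R\Phi_t(\uuomega,s)}{\uuomega\cdot y-s}\,ds\,dS(\uuomega),
$$
substitute $y=\alpha'/\alpha_3$, and use $\phi_t(\uualpha)=\Phi_t(\alpha'/\alpha_3)/(2\alpha_3^2)$; clearing the fraction in the denominator by multiplying through by $\alpha_3$ gives
$$
R_Sf(\uualpha,t)=\frac{1}{8\pi^2}\intL_{S^1}p.v.\intR\frac{\partial_s G(\uuomega,s,t)}{\alpha_3\,\uuomega\cdot\alpha'-s\alpha_3^2}\,ds\,dS(\uuomega),
$$
which is exactly the inner principal-value integral in \eqref{eq:inversion} divided by $8\pi^2$. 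Finally I apply the Finch--Patch--Rakesh inversion of $R_S$ in the differential form expressing $f(\vx)$ as a constant multiple of $\intL_{S^2}|r_{det}\uualpha-\vx|^{-1}[\partial_t(t\partial_t(tR_Sf))]_{t=|r_{det}\uualpha-\vx|}\,dS(\uualpha)$, substitute the formula for $R_Sf$ just derived, and commute $\partial_t\,t\,\partial_t\,t$ past the principal-value integral in $s$. Bookkeeping the constants (the $1/(8\pi^2)$ from the preceding step together with the Finch--Patch--Rakesh constant) collapses them into the prefactor $-r_{det}/(16\pi^2)$ of \eqref{eq:inversion}.

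The chief obstacle I foresee is the rigorous justification of the two interchanges of limit and integral: the pointwise recovery of $\Phi_t$ via the filtered back-projection formula, and the commutation of $\partial_t(t\partial_t(t\,\cdot))$ with the singular $s$-integral inside the outer spherical integral. Both are tractable because the smoothness and compact support of $f$ propagate along the chain $f\mapsto R_Sf\mapsto\phi_t\mapsto\Phi_t$ to render $\Phi_t$ smooth and compactly supported in $y$ uniformly on compact $t$-intervals, so that $G(\uuomega,s,t)$ is smooth in $(s,t)$ and rapidly decreasing in $s$, legitimising each exchange.
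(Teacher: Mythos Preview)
Your approach is essentially identical to the paper's: decompose $\mathcal{R}_P=F\circ R_S$, convert the Funk--Minkowski transform to a 2D Radon transform via Theorem~\ref{thm:funkradon}, apply filtered back-projection to recover $R_Sf$, and finish with the Finch--Patch--Rakesh inversion. The intermediate formula you obtain for $R_Sf$ and the combination of constants match the paper's derivation.

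There is, however, a genuine slip in your final paragraph. You assert that compact support of $f$ propagates to make $\Phi_t$ compactly supported in $y$ and $G$ rapidly decreasing in $s$. This is false: $\Phi_t(y)=2\phi_t(\uualpha)\alpha_3^2$ with $\alpha_3^2=(1+|y|^2)^{-1}$, and since $\phi_t=R_Sf(\cdot,t)$ has no reason to vanish near the equator $\alpha_3=0$, the function $\Phi_t$ decays only like $|y|^{-2}$; correspondingly $G(\uuomega,s,t)=(1+s^2)^{-1/2}\mathcal{R}_Pf(\ldots)$ decays only like $|s|^{-1}$. Thus $\Phi_t$ is neither compactly supported nor Schwartz, and the standard filtered back-projection formula does not apply directly. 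The paper confronts exactly this issue in Appendix~B, observing that $\Phi_t$ is smooth and in $L^2(\RR^2)$, that $R\Phi_t$ and $\partial_sR\Phi_t$ are smooth and $L^2$ in $s$, and then justifying \eqref{eq:FBP} by an $L^2$-approximation argument using the projection-slice theorem and the $L^2$-boundedness of the Hilbert transform. Your proof sketch is otherwise sound, but this justification needs to replace the incorrect compact-support claim.
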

\begin{proof}

From Theorem~\ref{thm:funkradon}, we have
\begin{equation}
F\phi(\uutheta,t)=\frac{1}{|\theta'|} R[\Phi(x_1,x_2,t)] \left(\frac{\theta'}{|\theta'|},-\frac{\theta_3}{|\theta'|},t\right),
\label{eq:FunkRadon}
\end{equation}
where $\uutheta=(\theta',\theta_3)$, $\uualpha = (\alpha',\alpha_3)\in S^2$, $\Phi(\alpha'/\alpha_3,t)=2\phi(\uualpha,t)\alpha_3^2$ and $R$ is the 2D  Radon transform of $\Phi$ with respect to the first two variables. That is,
$$
R[\Phi(x_1,x_2,t)](\uuomega,s,t)=\intR \Phi(s\uuomega+\nu \uuomega^\perp,t)d\nu,\quad\mbox{ for  } (\uuomega,s)\in S^1\times\RR.
$$
After changing variables in (\ref{eq:FunkRadon}) we obtain the relation 
\begin{equation}
R[\Phi(x_1,x_2,t)] (\uuomega,s,t)=\frac{1}{\sqrt{1+s^2}}F\phi\left(\frac{\uuomega}{\sqrt{1+s^2}},\frac{-s}{\sqrt{1+s^2}},t\right).
\label{eq:FunkRadon2}
\end{equation}
In order to recover $\Phi$, we use the filtered backprojection formula for the inversion of the 2D Radon transform
\footnote{For completeness, we verify the application of the filtered backprojection formula to the function $\Phi$
in Appendix B.} 
\begin{equation}
\label{eq:FBP}
\begin{split}
\Phi(x_1,x_2,t) =& \frac{1}{4\pi} (R^\# H \partial_s R\Phi)(x_1,x_2,t)\\ 
=& \frac{1}{4\pi^2} \int\limits_{S^1} \left.\left( p.v. \intR \frac{\partial_{s'} (R\Phi) (\uuomega,s',t)}{s-s'} \,ds'\right)\right|_{s = \vx\cdot\uuomega}\,dS(\uuomega),
\end{split}
\end{equation}
where $\displaystyle R^\# u(x_1,x_2):=\int_{S^1}u(\uuomega,(x_1,x_2)\cdot\uuomega)\,dS(\uuomega)$ is the backprojection operator and $\displaystyle H u(s) := p.v.\frac{1}{\pi}\int_\RR \frac{u(s')}{s-s'}\,ds'$ is the Hilbert transform.\\

Applying (\ref{eq:FBP}) to (\ref{eq:FunkRadon2}) leads to the inversion formula for the Minkowski-Funk transform:
\begin{equation}
\label{eq:inversionofmf}
\begin{split}
\phi(\uualpha,t)=& 2^{-1}\alpha_3^{-2}\Phi(\alpha'/\alpha_3,t)\\
=& \frac{1}{8\pi \alpha_3^2}\,R^\#\left( H \partial_s \left[\frac1{\sqrt{1+s^2}} F\phi\left( \frac{\uuomega}{\sqrt{1+s^2}},\frac{-s}{\sqrt{1+s^2}},t\right)\right]\right) \left(\frac{\alpha'}{\alpha_3},t\right)
\end{split}
\end{equation}
Now let $\phi(\uualpha,t) = (R_S f)(\uualpha,t)$. Theorem \ref{thm:R_P} implies that $F\phi = \mathcal{R}_P f$. Hence, we obtain the following expression for the spherical Radon transform of $f$:
\begin{equation}
\label{eq:R_S f}
\begin{split}
R_S f(\uualpha,t)=& \frac{1}{8\pi \alpha_3^2}\,R^\#\left( H \partial_s \left[\frac1{\sqrt{1+s^2}}  (\mathcal{R}_P f)\left( \frac{\uuomega}{\sqrt{1+s^2}},\frac{-s}{\sqrt{1+s^2}},t\right)\right]\right) 
\left(\frac{\alpha'}{\alpha_3},t\right)\\
=& \frac{1}{8\pi \alpha_3^2}\,R^\# H \partial_s \left[G(\uuomega,s,t)\right]
\left(\frac{\alpha'}{\alpha_3},t\right).
\end{split}
\end{equation}

In order to recover $f$, we apply the inversion formula for $R_S$ given in \cite{finchpr04}:
\begin{equation}\label{eq:inversionofspherical}
f(\vx)=-\frac{r_{det}}{8\pi^2}\intL_{S^2}\left.\frac{\partial_tt\partial_ttR_Sf(\uualpha,t)}{t}\right|_{t=|r_{det}\uualpha-\vx|}dS(\uualpha).
\end{equation}
Combining two equations~\eqref{eq:R_S f} and~\eqref{eq:inversionofspherical} gives us 
\begin{equation*}
\begin{split}
 f(\vx) = -\frac{r_{det}}{64\pi^3}\int\limits_{S^2}\frac{1}{\alpha_3^2} R^\#\left[ H \partial_s \;\partial_t t \partial_t t G(\uuomega,s,t)\right]
 \left(\frac{\alpha'}{\alpha_3},t\right)\bigg|_{t=|r_{det}\uualpha-\vx|} \frac{dS(\uualpha)}{|r_{det}\uualpha - \vx|},
\end{split}
\end{equation*}
which is equivalent to (\ref{eq:inversion}).
\end{proof}
\begin{cor}
Let $f\in C^\infty(\RR^3)$ be an even function with compact support in $B(0,r_{det})$.
Then, if $g(\uuomega,s,t)=(1+s^2)^{-\frac12}P\left(\uuomega/\sqrt{1+s^2},-s/\sqrt{1+s^2},t\right)$, the following relation holds
\[
f(\vx)=-\frac{r_{det}}{8\pi^2}\intL_{S^2}\left(\intL_{S^1}p.v. \intR  \frac{ \partial_s \partial_t t g(\uuomega,s,t)|_{t=|r_{det}\uualpha-\vx|}}
{\alpha_3\,\uuomega\cdot\alpha'-s\alpha_3^2}\; \frac{ds\, dS(\uuomega)}{|r_{det}\uualpha-\vx|}
\right)dS(\uualpha).
\]
\end{cor}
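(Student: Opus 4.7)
The plan is to deduce the corollary from Theorem~\ref{thm:inversion} by invoking equation~\eqref{eq:PandR}, which relates $P$ directly to $\mathcal{R}_Pf$ via a single time derivative. Concretely, I would use
\begin{equation*}
8\pi^2 P(\uutheta,t) = \partial_t\bigl(t\,\mathcal{R}_Pf(\uutheta,t)\bigr)
\end{equation*}
to eliminate one of the two outer factors of $\partial_t\,t$ from the iterated derivative $\partial_t\, t\,\partial_t\, t\, G$ appearing in the integrand of \eqref{eq:inversion}.

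The first step is to translate this identity to the level of the pullback variables $(\uuomega,s,t)$. The substitution $\uutheta = (\uuomega/\sqrt{1+s^2},\,-s/\sqrt{1+s^2})$ and the scalar factor $(1+s^2)^{-1/2}$ depend only on $s$, so both commute with $\partial_t$. Applying them to both sides of the identity above and recalling the definitions of $G$ and $g$ yields
\begin{equation*}
\partial_t\bigl(t\, G(\uuomega,s,t)\bigr) = 8\pi^2\, g(\uuomega,s,t).
\end{equation*}

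Next I would iterate once and differentiate in $s$. Since $\partial_s$ commutes with $\partial_t$ and with the evaluation at $t=|r_{det}\uualpha-\vx|$, a direct calculation gives
\begin{equation*}
\partial_s\,\partial_t\, t\,\partial_t\, t\, G(\uuomega,s,t) = \partial_s\,\partial_t\bigl(t\cdot 8\pi^2\, g(\uuomega,s,t)\bigr) = 8\pi^2\,\partial_s\,\partial_t\, t\, g(\uuomega,s,t).
\end{equation*}
Substituting this into the integrand of \eqref{eq:inversion}, pulling $8\pi^2$ out of all integrals, and combining it with the scalar prefactor from Theorem~\ref{thm:inversion} produces the constant $-r_{det}/(8\pi^2)$ stated in the corollary; every other structural element of the formula---the outer $S^2$ backprojection over $\uualpha$, the inner $S^1$ integration over $\uuomega$, the principal-value $s$-integral, the denominator $\alpha_3\,\uuomega\cdot\alpha' - s\alpha_3^2$, and the weight $|r_{det}\uualpha - \vx|^{-1}$---is inherited verbatim.

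There is no serious analytic obstacle; the only step requiring a little care is the verification that the time derivatives pass cleanly through the $s$-dependent reparametrization and the $(1+s^2)^{-1/2}$ prefactor built into the definitions of $G$ and $g$, which is precisely what makes the compact identity $\partial_t(tG) = 8\pi^2 g$ hold and thereby reduces the corollary to a one-line substitution into Theorem~\ref{thm:inversion}.
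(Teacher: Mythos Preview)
Your argument is correct and mirrors the paper's own proof: both use \eqref{eq:PandR} to deduce $\partial_t(tG)=8\pi^2 g$ and then substitute this directly into the formula of Theorem~\ref{thm:inversion}. The paper's proof is precisely the one-line observation you describe.
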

Indeed, since $P(\uutheta,t)=(8\pi^2)^{-1}\partial_t t\mathcal R_P f(\uutheta,t)$ (see~\eqref{eq:PandR}) we can write
\begin{equation*}\label{eq:gandG}
g(\uuomega,s,t) =\frac1{\sqrt{1+s^2}} P\left(\frac{\uuomega}{\sqrt{1+s^2}},\frac{-s}{\sqrt{1+s^2}},t\right)=\frac{\partial_tt G(\uuomega,s,t)}{8\pi^2\sqrt{1+s^2}},
\end{equation*}
where $G(\uuomega,s,t)$ is the function defined in the preceding theorem. Then the result follows immediately from Theorem~\ref{thm:inversion}.

\section{Range description}\label{range}
In this section, we describe the range of $\mathcal R_P$ and necessary and sufficient conditions for a function $P$ to arise as data measured by the circular detectors.

As mentioned before, the Minkowski-Funk transform and the spherical Radon transform are well studied, so their range descriptions have already been discussed (see \cite{agranovskykq07,agranovskyfk09,agranovskyn10,gardner95,groemer96}). 
In~\cite{gardner95,groemer96}, it was proved that the Minkowski-Funk transform is a (continuous) bijection from $C^\infty_e(S^{n-1})$ to itself, where $C^\infty_e(S^{n-1})=\{\phi\in C^\infty(S^{n-1}):\phi(\hat \theta)=\phi(-\hat\theta)\}$.
In~\cite{agranovskykq07}, M. Agranovsky, P. Kuchment, and E. T. Quinto provided four different types of complete range descriptions for the spherical mean transform.
In particular, one of the range descriptions states that necessary and sufficient conditions for the function $g\in C^\infty_c(S^2\times[0,2r_{det}])$ to be representable as $g= R_Sf$ for some $f\in C^\infty_c(B(0,r_{det}))$, is that for any integers $m,l$, the $(m,l)$-th order spherical harmonic term $(\mathbf H_{\frac n2-1}g)_{m,l}(\lambda)$ of $\mathbf H_{\frac n2-1}g(\hat\theta,\lambda)$ vanishes at non-zeros of the Bessel function $J_{m+n/2-1}(\lambda)$, where 
$$
\mathbf H_{\frac n2-1}g(\hat\theta,\lambda)=\intL\half g(\hat\theta,t)j_{\frac n2-1}(\lambda t)t^{n-1}dt.
$$
Here $j_n(t)$ is the spherical Bessel function:
$$
j_n(t)=\frac{2^n\Gamma(n+1)J_n(t)}{t^n},
$$
where $J_n(t)$ is the Bessel function of the first kind of order $n$ and $\Gamma(t)$ is the Gamma function.

Combining these two range descriptions, we have the following range description for $\mathcal R_P$:
\begin{thm}\label{thm:rangeofrp}
Let $g\in C^\infty_c(S^2\times[0,2r_{det}])$. 
The following conditions are necessary and sufficient for the function $g$ to be representable as $g=\mathcal R_Pf$ for some $f\in C^\infty_c(B(0,r_{det}))$:
\begin{itemize}
\item $g$ is even in $\hat \theta\in S^2$, i.e., $g(\hat\theta,t)=g(-\hat\theta,t)$.
\item For any positive integer $n$, let 
$$
\mathbf H_{\frac n2-1}g(\hat\theta,\lambda)=\intL\half g(\hat\theta,t)j_{\frac n2-1}(\lambda t)t^{n-1}dt. 
$$
Then for any integers $m,l$, the $(m,l)$-th order spherical harmonic term $(\mathbf H_{\frac n2-1}g)_{m,l}(\lambda)$ of $\mathbf H_{\frac n2-1}g(\hat\theta,\lambda)$ vanishes at non-zeros of the Bessel function $J_{m+n/2-1}(\lambda)$.
\end{itemize}
\end{thm}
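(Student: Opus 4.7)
The plan is to reduce the range description of $\mathcal{R}_P$ to those of its two factors using Theorem~\ref{thm:R_P}, which writes $\mathcal{R}_P = F \circ R_S$ on even functions. I will combine the Gardner-Groemer bijectivity of $F$ on $C^\infty_e(S^2)$ with the Agranovsky-Kuchment-Quinto Bessel-type range description of $R_S$ already recalled in the excerpt. The technical bridge is that $F$ is a Fourier multiplier on spherical harmonics: $F Y_{m,l} = c_m Y_{m,l}$ on $S^2$ with $c_m = 2\pi P_m(0)$, $P_m$ the Legendre polynomial. Since $c_m = 0$ exactly for odd $m$ and $c_m \ne 0$ for every even $m$, and since $\mathbf{H}_{n/2-1}$ acts only on the radial variable and so commutes with the spherical-harmonic expansion in $\uutheta$, I obtain the identity
\[
(\mathbf{H}_{n/2-1}\, Fh)_{m,l}(\lambda) \;=\; c_m \, (\mathbf{H}_{n/2-1}\, h)_{m,l}(\lambda),
\]
which will pass the Bessel condition back and forth through $F$ in both directions.

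\textbf{Necessity.} If $g = \mathcal{R}_P f$, then, after replacing $f$ by its even part (the odd part is annihilated by $\mathcal{R}_P$), the function $h := R_S f$ is even in $\uualpha$ (change $\hat\beta \mapsto -\hat\beta$ in the definition), so $g = F h$ by Theorem~\ref{thm:R_P}. The evenness of $g$ in $\uutheta$ is immediate from the definition of $F$. The Bessel condition for $g$ follows from that for $h$, which is guaranteed by the Agranovsky-Kuchment-Quinto result applied to $h = R_S f$, via the multiplier identity displayed above: for even $m$ the nonvanishing of $c_m$ transfers the vanishing, and for odd $m$ both sides are identically zero by evenness.

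\textbf{Sufficiency.} Conversely, given $g \in C^\infty_c(S^2 \times [0, 2r_{det}])$ satisfying both conditions, set $h(\uualpha, t) := F^{-1}[g(\cdot, t)](\uualpha)$. By Gardner-Groemer this is smooth and even in $\uualpha$ for every $t$, and smoothness in $t$ together with compact support in $t \in [0, 2r_{det}]$ are inherited since $F^{-1}$ acts only on the angular variable and commutes with $\partial_t$. The same multiplier identity, read in reverse, produces the Bessel condition on $h$; the Agranovsky-Kuchment-Quinto theorem then yields $f \in C^\infty_c(B(0, r_{det}))$ with $R_S f = h$. Theorem~\ref{thm:R_P}, applied after replacing $f$ by its even part (which leaves $R_S f$ unchanged on the relevant symmetric part, so that the identity $\mathcal{R}_P = F \circ R_S$ is available), gives $\mathcal{R}_P f = F(R_S f) = F h = g$.

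The substantive step is the eigenvalue identity $F Y_{m,l} = 2\pi P_m(0) Y_{m,l}$ together with $P_m(0) \ne 0$ for every even $m$; everything else is bookkeeping, the only subtlety being that $F$ and $F^{-1}$ act pointwise in $t$, so joint smoothness and support in $(\uualpha, t)$ transfer automatically. I do not anticipate genuine obstacles, since the problem has been pre-engineered by the factorization $\mathcal{R}_P = F \circ R_S$ to reduce to two results already in the literature.
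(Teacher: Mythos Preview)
Your proof is correct and follows the same overall strategy as the paper: factor $\mathcal{R}_P = F\circ R_S$, use the bijectivity of $F$ on $C^\infty_e(S^2)$, and reduce the Bessel-type range condition to the Agranovsky--Kuchment--Quinto condition for $R_S$ by showing that the spherical-harmonic coefficients of $g$ and of $F^{-1}g$ differ by nonzero scalars. The difference is in how that last step is carried out. You apply the Funk--Hecke theorem directly to $F$ and use the classical identity $FY_{m,l}=2\pi P_m(0)\,Y_{m,l}$, whose nonvanishing for even $m$ is immediate. The paper instead expresses $F^{-1}$ via Helgason's inversion formula, applies Funk--Hecke inside that formula, and obtains the multiplier
\[
\frac{1}{2\pi}\,\frac{d}{du}\bigg|_{u=1}\int_0^u \frac{P_m(\sqrt{1-v^2})}{\sqrt{u^2-v^2}}\,dv,
\]
whose nonvanishing then needs a separate argument. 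Your route is shorter and avoids that extra step; the paper's route has the minor advantage of not relying on the Gardner--Groemer bijectivity statement to define $F^{-1}$, since Helgason's formula gives it explicitly. One small cleanup: in your sufficiency direction the identity $\mathcal{R}_P f = F(R_S f)$ holds for \emph{any} $f$ (it is just the definition), so you do not actually need to pass to the even part of $f$ before invoking Theorem~\ref{thm:R_P}.
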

\begin{proof}
It is enough to show that the $(m,l)$-th order spherical harmonic term $(\mathbf H_{\frac n2-1}g)_{m,l}(\lambda)$ of $\mathbf H_{\frac n2-1}g(\hat\theta,\lambda)$ vanishes at non-zeros of the Bessel function $J_{m+n/2-1}(\lambda)$ if and only if
the $(m,l)$-th order spherical harmonic term $(\mathbf H_{\frac n2-1}[F^{-1}g])_{m,l}(\lambda)$ of $\mathbf H_{\frac n2-1}[F^{-1}g](\hat\theta,\lambda)$ vanishes at non-zeros of the Bessel function $J_{m+n/2-1}(\lambda)$, where $F^{-1}$ is the inversion of the Minkowski-Funk transform.

In~\cite{helgason99radon}, the inversion formula for the Funk-Minkowski transform is presented as follows:
$$
\phi(\uualpha)=\frac{1}{4\pi^2}\left.\left\{\frac{d}{du}\intL^u_0\intL_{S^2} F\phi(\uutheta)\delta(\uutheta\cdot\uualpha-\sqrt{1-v^2})\frac{dS(\uutheta)dv}{(u^2-v^2)^{1/2}}\right\}\right|_{u=1},
$$
where $\delta$ is the Dirac delta function.
Therefore, $F^{-1}g(\uualpha,t)$ can be represented as
$$
\frac{1}{4\pi^2}\left.\left\{\frac{d}{du}\intL^u_0\intL_{S^2} g(\uutheta,t)\delta(\uutheta\cdot\uualpha-\sqrt{1-v^2})\frac{dS(\uutheta)dv}{(u^2-v^2)^{1/2}}\right\}\right|_{u=1}.
$$
Consider the $(m,l)$-th order spherical harmonic term $(\mathbf H_{\frac n2-1}[F^{-1}g])_{m,l}(\lambda)$:
\begin{equation}\label{eq:mthorderspherical}
\begin{split}
&(\mathbf H_{\frac n2-1}[F^{-1}g])_{m,l}(\lambda)=\intL_{S^2}\mathbf H_{\frac n2-1}[F^{-1}g](\uualpha,\lambda)Y_{l}^m(\uualpha)dS(\uualpha)\\
&=\frac{1}{4\pi^2}\intL_{S^2}\left.\left\{\frac{d}{du}\intL^u_0\intL_{S^2} \mathbf H_{\frac n2-1}g(\uutheta,\lambda)\delta(\uutheta\cdot\uualpha-\sqrt{1-v^2})\frac{dS(\uutheta)dv}{(u^2-v^2)^{1/2}}\right\}\right|_{u=1}Y_{l}^m(\uualpha)dS(\uualpha)\\
&=\frac{1}{4\pi^2}\left.\left\{\frac{d}{du}\intL^u_0\intL_{S^2} \mathbf H_{\frac n2-1}g(\uutheta,\lambda)\intL_{S^2}\delta(\uutheta\cdot\uualpha-\sqrt{1-v^2})Y^m_{l}(\uualpha)dS(\uualpha)\frac{dS(\uutheta)dv}{(u^2-v^2)^{1/2}}\right\}\right|_{u=1},
\end{split}
\end{equation}
where in the last line, we used the Fubini-Tonelli Theorem.

To compute the inner integral with respect to $\uualpha$ of \eqref{eq:mthorderspherical}, we need the Funk-Hecke theorem~\cite{daix13,natterer01,seeley66}: if $\int^1_{-1}|h(t)|dt<\infty$,
\begin{equation*}
\begin{split}
&\intL_{S^{n-1}}h(\uutheta\cdot\uuomega)Y^m_{l}(\uuomega)dS(\uuomega)=c(n,m)Y^m_{l}(\uutheta),\\
&c(n,m)=|S^{n-2}|\intL^1_{-1}h(t)C^{(n-2)/2}_m(t)(1-t^2)^{(n-3)/2}dt.
\end{split}
\end{equation*}
Here $C^\nu_m(t),\nu>-1/2$ is the Gegenbauer polynomial of degree $m$ and 
$C^\frac12_m(t)=P_m(t)$ is the Legendre polynomial of degree $m$.
Applying the Funk-Hecke theorem to the inner integral of \eqref{eq:mthorderspherical}, the $(m,l)$-th order spherical harmonic term $(\mathbf H_{\frac n2-1}[F^{-1}g])_{m,l}(\lambda)$ is equal to
\begin{equation*}
\begin{split}
&\frac{1}{2\pi}\left.\left\{\frac{d}{du}\intL^u_0\intL_{S^2} \mathbf H_{\frac n2-1}g(\uutheta,\lambda)Y^m_{l}(\uutheta)dS(\uutheta)\;\frac{P_m(\sqrt{1-v^2})}{(u^2-v^2)^{1/2}}dv\right\}\right|_{u=1}.
\end{split}
\end{equation*}
Thus, we have
\begin{equation}\label{eq:mltheorderspherical}
(\mathbf H_{\frac n2-1}[F^{-1}g])_{m,l}(\lambda)=\frac{(\mathbf H_{\frac n2-1}g)_{m,l}(\lambda)}{2\pi}\left.\left\{\frac{d}{du}\intL^u_0\frac{P_m(\sqrt{1-v^2})}{(u^2-v^2)^{1/2}}dv\right\}\right|_{u=1}.
\end{equation}
Since for any integer $m$, there exists a positive number $\epsilon$ such that $P_m(\sqrt{1-v^2})(u^2-v^2)^{-1/2}$ is not equal to zero on the interval $[1-\epsilon,1]$, $\int^u_0P_m(\sqrt{1-v^2})(u^2-v^2)^{-1/2}dv$ is not constant on $[1-\epsilon,1]$. 
Therefore, from \eqref{eq:mltheorderspherical}, we have $(\mathbf H_{\frac n2-1}g)_{m,l}(\lambda)=0$ if and only if $(\mathbf H_{\frac n2-1}[F^{-1}g])_{m,l}(\lambda)=0$. 
\end{proof}
Since $P(\uutheta,t)=(8\pi^2)^{-1}\partial_t t\mathcal R_P f(\uutheta,t)$, we have the following necessary and sufficient conditions for a function $P$ to be data measured by the circular detector from the initial pressure field.
\begin{thm}
Let $P\in C^\infty_c(S^2\times[0,2r_{det}])$. 
The following conditions are necessary and sufficient for the function $P$ to be data measured by the circular detectors from the initial pressure field $f\in C^\infty_c(B(0,r_{det}))$:
\begin{itemize}
\item $P$ is even in $\hat \theta\in S^2$, i.e., $P(\hat\theta,t)=P(-\hat\theta,t)$.
\item 
The integral of $P$ with respect to $t$ from $0$ to $2r_{det}$ is equal to zero.
\item For any positive integer $n$, let 
$$
\mathbf H_{\frac n2-1}P(\hat\theta,\lambda)=\intL\half \intL^t_0P(\hat\theta,\tau) j_{\frac n2-1}(\lambda t)t^{n-2}d\tau dt.
$$
Then for any integers $m,l$, the $(m,l)$-th order spherical harmonic term $(\mathbf H_{\frac n2-1}P)_{m,l}(\lambda)$ of $\mathbf H_{\frac n2-1}P(\hat\theta,\lambda)$ vanishes at non-zeros of the Bessel function $J_{m+n/2-1}(\lambda)$.
\end{itemize}
\end{thm}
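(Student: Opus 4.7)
The plan is to reduce everything to Theorem~\ref{thm:rangeofrp} by inverting the relation $P=(8\pi^2)^{-1}\partial_t(t\mathcal R_Pf)$ from \eqref{eq:PandR}. Since $t\mathcal R_Pf(\hat\theta,t)$ vanishes at $t=0$, this relation can be integrated to give
\[
\mathcal R_Pf(\hat\theta,t)=\frac{8\pi^2}{t}\intL_0^tP(\hat\theta,\tau)\,d\tau,\qquad t>0,
\]
with smooth extension $\mathcal R_Pf(\hat\theta,0)=8\pi^2P(\hat\theta,0)$ obtained from a Taylor expansion of $\int_0^tP\,d\tau$ around $t=0$. Hence the map $\mathcal R_Pf\leftrightarrow P$ is a bijection between the relevant smooth spaces once the necessary support condition at $t=2r_{det}$ is imposed. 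The strategy is then to translate each condition of Theorem~\ref{thm:rangeofrp} into an equivalent condition on $P$.

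First I would handle the two structural conditions. Evenness in $\hat\theta$ is trivially equivalent between $P$ and $\mathcal R_Pf$, since the transformation only differentiates/integrates in $t$. For support in $[0,2r_{det}]$, the formula above shows that $\mathcal R_Pf(\hat\theta,t)=0$ for $t\geq 2r_{det}$ if and only if $\intL_0^tP(\hat\theta,\tau)\,d\tau=0$ for all $t\geq 2r_{det}$; because $P$ itself is compactly supported in $t\in[0,2r_{det}]$, this single-value condition $\intL_0^{2r_{det}}P(\hat\theta,\tau)\,d\tau=0$ suffices, which is precisely the second bullet in the statement.

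Next I would verify the Fourier--Bessel condition by a direct computation. Substituting the formula for $\mathcal R_Pf$ into $\mathbf H_{\frac n2-1}(\mathcal R_Pf)$ yields
\[
\mathbf H_{\frac n2-1}(\mathcal R_Pf)(\hat\theta,\lambda)=\intL_0^\infty\frac{8\pi^2}{t}\left(\intL_0^tP(\hat\theta,\tau)\,d\tau\right)j_{\frac n2-1}(\lambda t)\,t^{n-1}\,dt=8\pi^2\,\mathbf H_{\frac n2-1}P(\hat\theta,\lambda),
\]
using exactly the definition of $\mathbf H_{\frac n2-1}P$ adopted in the theorem (with weight $t^{n-2}$). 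Because spherical harmonic expansion in $\hat\theta$ is linear, the $(m,l)$-coefficients are proportional, so $(\mathbf H_{\frac n2-1}\mathcal R_Pf)_{m,l}(\lambda)$ and $(\mathbf H_{\frac n2-1}P)_{m,l}(\lambda)$ vanish at the same points, and in particular on the non-zero roots of $J_{m+n/2-1}$ simultaneously. Combined with Theorem~\ref{thm:rangeofrp}, this gives the three stated conditions.

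The only real subtlety is checking that $\mathcal R_Pf$ defined by the integral formula lies in $C^\infty_c(S^2\times[0,2r_{det}])$ whenever $P$ does and the integral condition holds; the potential issue at $t=0$ dissolves via Taylor expansion, and compact support follows from the second bullet as noted. Everything else is a direct bookkeeping transfer from Theorem~\ref{thm:rangeofrp} through the linear correspondence between $P$ and $\mathcal R_Pf$.
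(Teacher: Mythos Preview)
Your proposal is correct and follows essentially the same route as the paper: define $g(\hat\theta,t)=t^{-1}\int_0^t P(\hat\theta,\tau)\,d\tau$ (your $(8\pi^2)^{-1}\mathcal R_Pf$), check that the three bullets for $P$ correspond exactly to the hypotheses and conclusion of Theorem~\ref{thm:rangeofrp} applied to $g$, and then invoke that theorem. The only place the paper expends more effort is the point you flag as ``the only real subtlety'': smoothness of $g$ at $t=0$ is proved there via two technical lemmas (Lemmas~\ref{lem:derivative} and~\ref{lem:integral}) using Taylor expansion with remainder and L'H\^opital, which is precisely the mechanism you allude to; your sketch would be complete once that step is written out (the substitution $\tau=ts$ giving $g(\hat\theta,t)=\int_0^1 P(\hat\theta,ts)\,ds$ makes it immediate).
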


\begin{proof}
Let us define a function $g$ on $S^2\times[0,2r_{det}]$ by
$$
g(\hat\theta,t)=\frac1t\intL^t_0P(\hat\theta,\tau)d\tau.
$$
Then $g$ is infinitely differentiable with respect to $t$ away from zero, $g(\hat\theta,0)=0$ is equivalent to $P(\hat\theta,0)=0$ by L'H\^opital's rule, and $g(\hat\theta,2r_{det})=0$ is equivalent to 
$$
\intL^{2r_{det}}_0P(\hat\theta,\tau)d\tau=0.
$$
It is sufficient to show that $g$ is infinitely differentiable and continuous at 0.
This will be accomplished in the following two lemmas, which are proved in Appendix A. 

\begin{lem}\label{lem:derivative}
Let $\phi\in C^\infty([0,\epsilon])$ for some $\epsilon>0$ satisfy $\phi(0)=0$, and let $n$ be a positive integer.
For any positive integer $k\leq n$, we have
$$
\intL^t_0\phi(\tau) \partial_t^{k}\left(\frac{(t-\tau)^{n+1}}{t}\right)d\tau\to0\qquad\mbox{as}\quad t\to 0.
$$
\end{lem}

Lemma \ref{lem:derivative} is used to prove:

\begin{lem}\label{lem:integral}
Let $\phi\in C^\infty([0,\epsilon])$ satisfy $\phi(0)=0$, and let $h(t)=\int_0^t\phi(\tau)d\tau/t$.
Then $h\in C^\infty([0,\epsilon])$ and $h(0)=0$.
\end{lem}

Theorem \ref{thm:rangeofrp} now immediately follows by applying Lemma \ref{lem:integral} to $g(\hat\theta,t)$ considered as a function of $t$ for fixed $\hat\theta$.
\end{proof}

\section{Numerical Experiments}\label{S:numerical}

In this section, we show the results of numerical experiments reconstructing the initial pressure field $f$ from the data $P(\uutheta,t) = \frac{1}{8 \pi^2}\partial_t \left( t\mathcal{R}_P f(\uutheta,t)\right)$.
We set $r_{det}=1$ and assume that $P(\uutheta,t)$ is given on a spherical grid in $\uutheta$ on the unit sphere and a uniform grid in $t$ on the interval $[0,2]$.

We follow a method of reconstruction using the formula in Theorem \ref{thm:inversion}:

\begin{enumerate}
 \item Compute $F^{-1}P(\cdot,t)$ for each fixed $t$ to get the values of $p(\uutheta,t)$ on $S^2\times[0,2]$
 \item Invert the wave operator to get $f(\vx)$
\end{enumerate}

Our phantoms are taken to be either the sum of multiples of characteristic
functions of balls
\begin{align}\label{eq:sharpphantom}
\chi_{\mathrm{sh}}(\vx)=\begin{cases}
             c &\mbox{if } |\vx-\vec{x_0}|<r\\
          0 &\mbox{if } |\vx-\vec{x_0}|\geq r
         \end{cases}
\end{align}
or the sum of $C^1$-functions of the form
\begin{equation}\label{eq:smoothphantom}
\chi_{\mathrm{sm}}(\vx)=c\varphi\left(|\vx-\vec{x_0}|^2\right),
\end{equation}
where $\varphi$ is defined by
\begin{align*}\label{eq:phantom_form}
 \varphi\left(s^2\right)=\begin{cases}
          1 &\mbox{if } |s|\leq r\\
          \displaystyle 1-3\left(\frac{s-r}{R-r}\right)^2+2\left(\frac{s-r}{R-r}\right)^3 &\mbox{if } r<|s|<R\\
          0 &\mbox{if } |s|\geq R
         \end{cases}
\end{align*}
and $0<r<R$. 
Our algorithm reconstructs the even part of the phantom, so we take phantoms supported in the upper half
of the unit ball where $x_3>0$.

The acoustic pressure generated by $\chi_{\mathrm{sh}}$ is
\begin{equation*}
 p(\vx,t)=\begin{cases}
         c \displaystyle \frac{|\vx-\vec{x_0}|-t}{2|\vx-\vec{x_0}|}&\mbox{if }|\vx-\vec{x_0}|-t<r\\
	  0&\mbox{if }|\vx-\vec{x_0}|-t\geq r\\
         \end{cases}
\end{equation*}
and the acoustic pressure generated by $\chi_{\mathrm{sm}}$ is given by
\begin{equation*}
 p(\vx,t)=c\frac{|\vx-\vec{x_0}|-t}{2|\vx-\vec{x_0}|}\varphi\left(\big( |\vx-\vec{x_0}|-t\big)^2\right)
\end{equation*}
(see \cite[Theorem 5.2]{haltmeierss05}).  In order to get the data $P(\uutheta,t)$ measured by our detectors,
we use a quadrature on each detector circle at each grid point in $t$, with the trapezoid rule and 100 points.

\begin{figure}[!htbp]
\begin{center}
\subfigure[Phantom]{\includegraphics[width=0.45\textwidth]{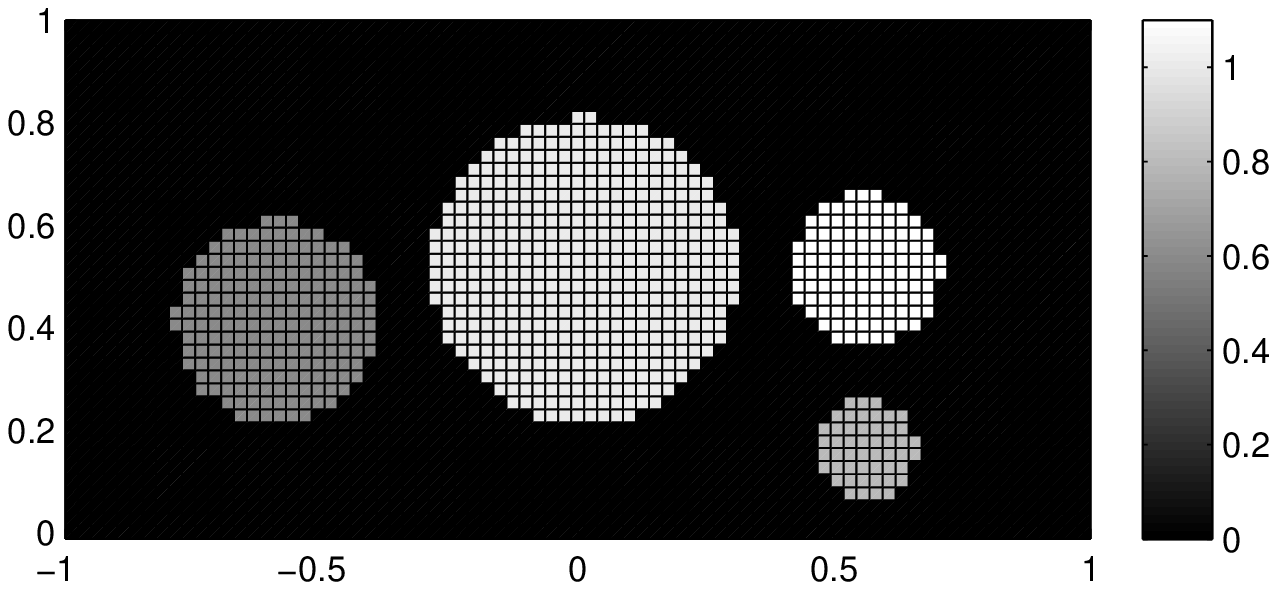}}     
\subfigure[Cross-section of the reconstruction in the $x_2x_3$-plane]{\includegraphics[width=0.45\textwidth]{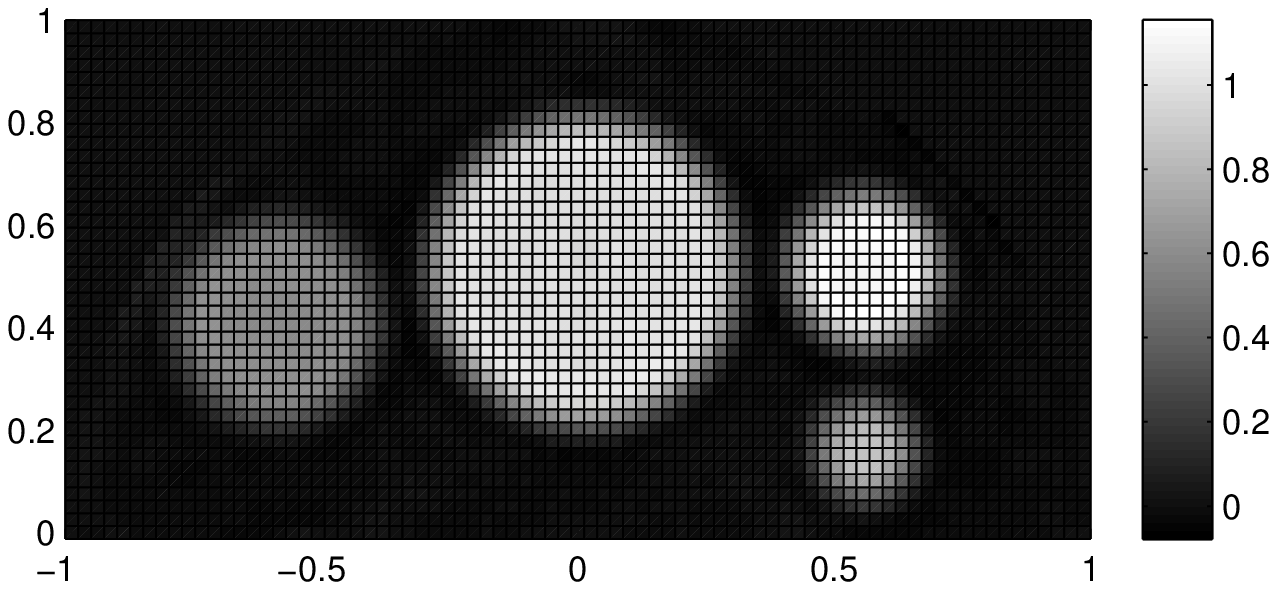}}\\
\subfigure[Slice of the reconstruction and phantom along the $x_3$-axis]{\includegraphics[width=0.45\textwidth]{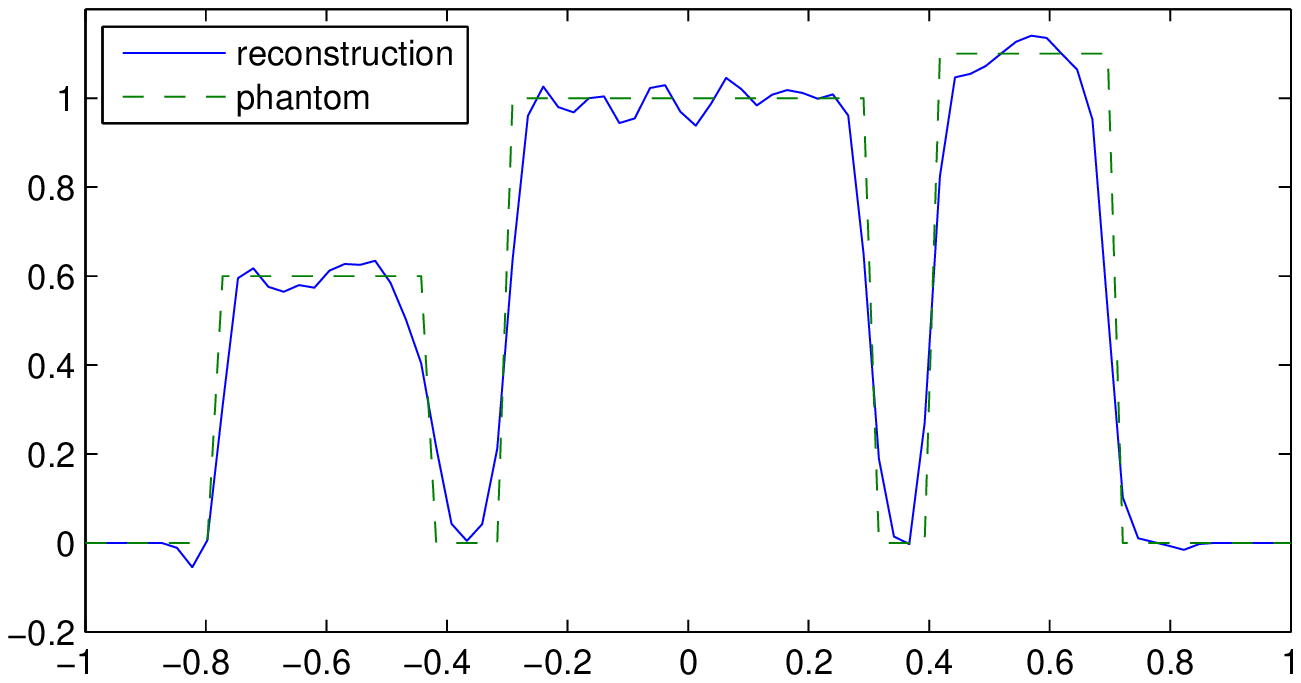}}
\subfigure[Slice of the reconstruction and phantom along the line $x_1=0,\;x_3=0.5$]{\includegraphics[width=0.45\textwidth]{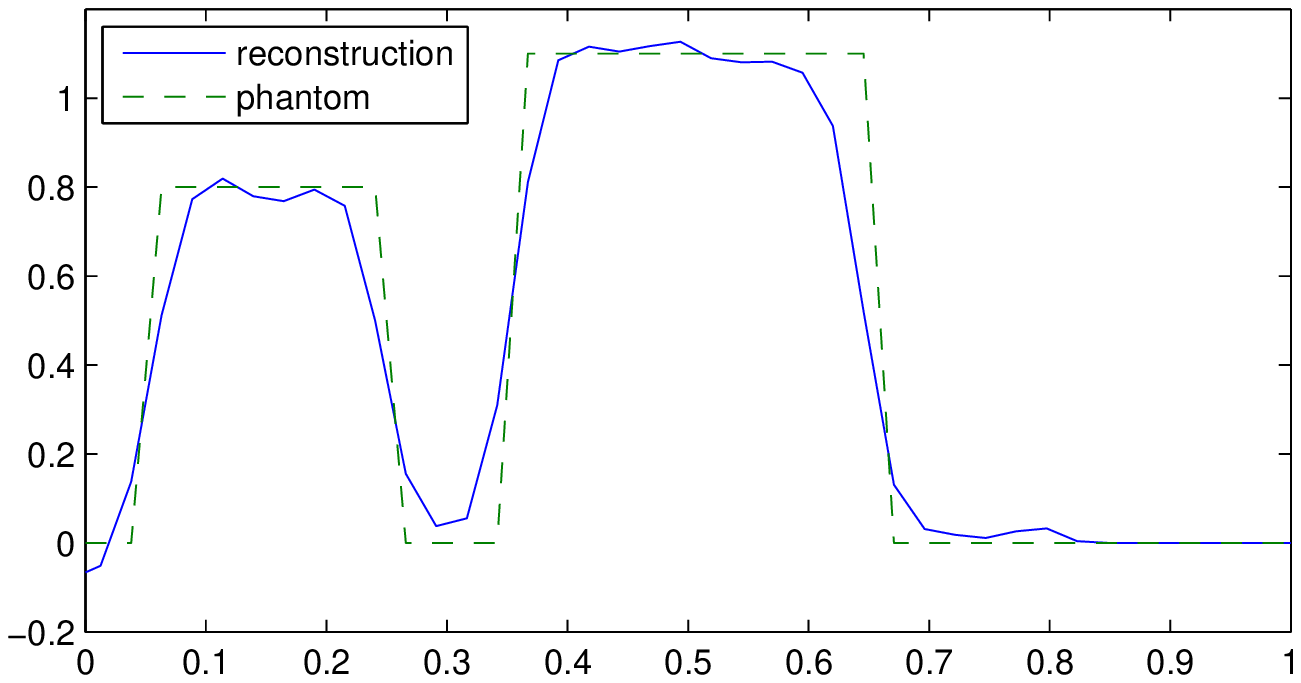}}
\end{center}
\caption{Reconstructions of a sharp phantom with centers $(0,-0.6,0.4)$, $(0,0,0.5)$, $(0,0.55,0.5)$,
and $(0,0.55,0.15)$, radii $0.2$, $0.3$, $0.15$, and $0.1$ and values 0.6, 1, 1.1, and 0.8, respectively}
\label{fig:4sharpballs}
\end{figure} 

%
%

\begin{figure}[h!]
\begin{center}
\subfigure[Phantom]{\includegraphics[width=0.45\textwidth]{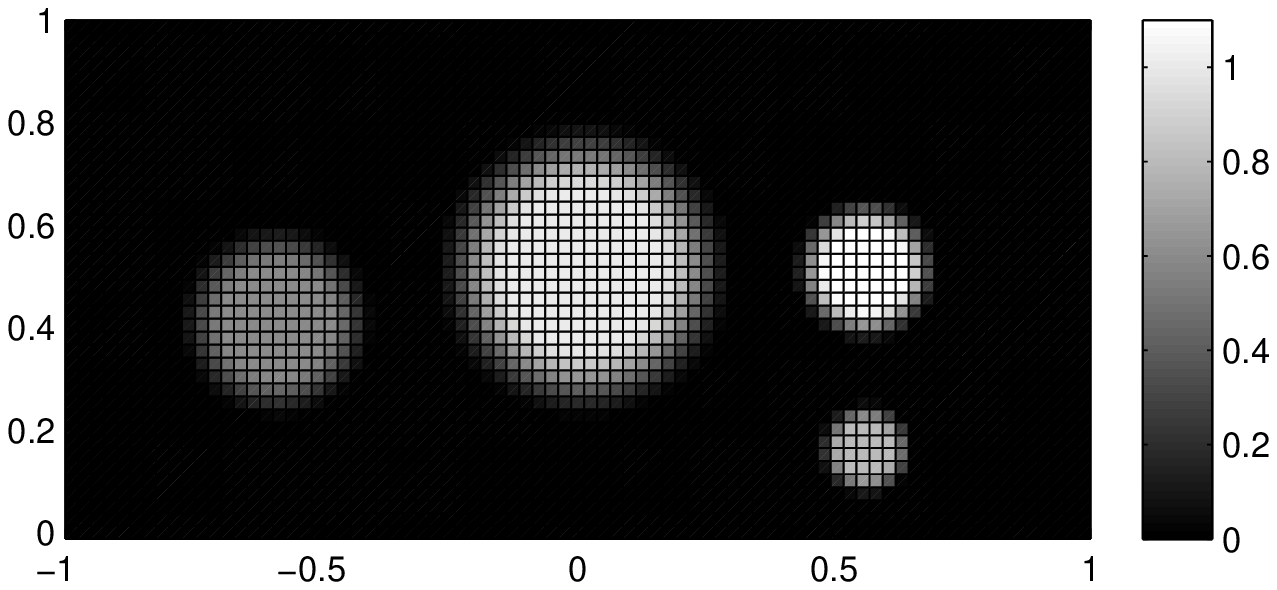}}    \\
\subfigure[Cross-section of the reconstruction in the $x_2x_3$-plane from exact data]{\includegraphics[width=0.45\textwidth]{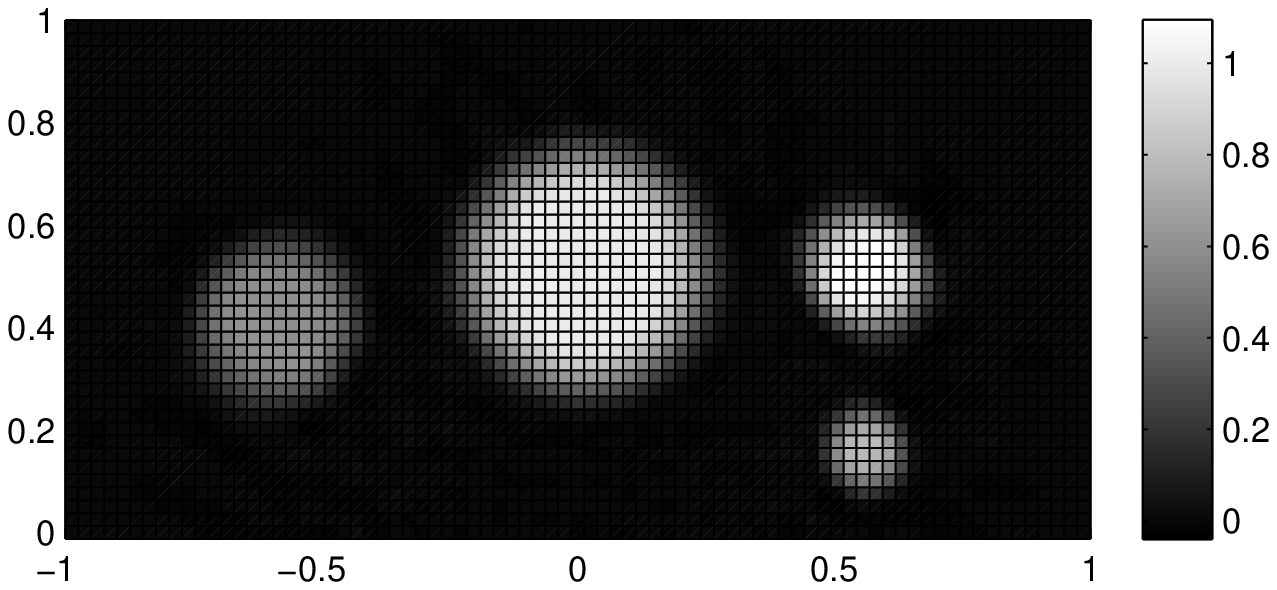}}
\subfigure[Cross-section of the reconstruction in the $x_2x_3$-plane from data with 20\% uniform random noise]{\includegraphics[width=0.45\textwidth]{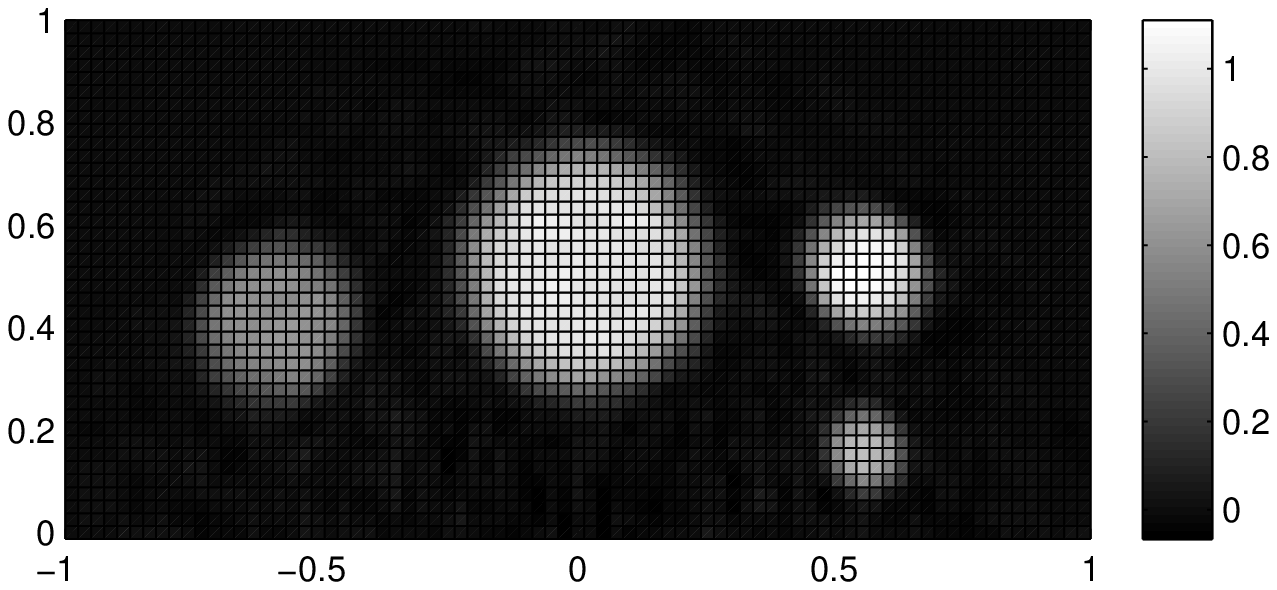}}\\
\subfigure[Slice of the reconstruction and phantom along the $x_3$-axis from exact data]{\includegraphics[width=0.45\textwidth]{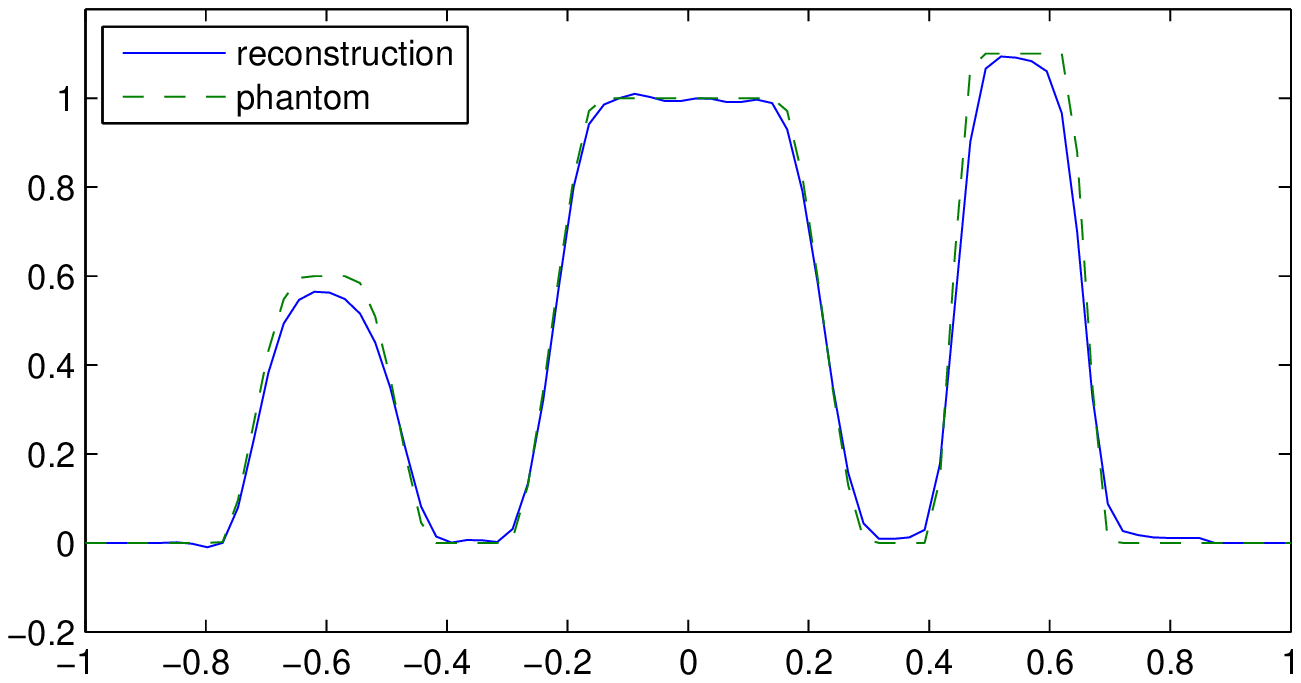}}
\subfigure[Slice of the reconstruction and phantom along the $x_3$-axis from data with 20\% uniform random noise]{\includegraphics[width=0.45\textwidth]{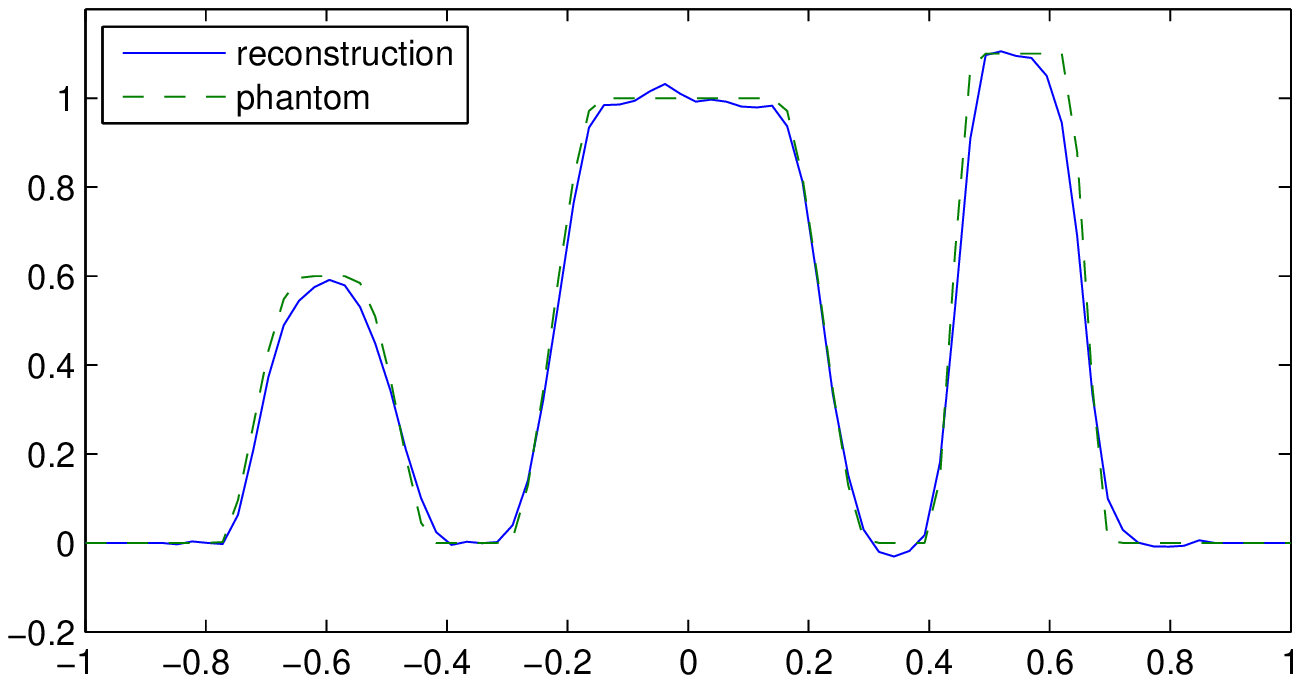}}\\
\subfigure[Slice of the reconstruction and phantom along the line $x_1=0,\;x_3=0.5$ from exact data]{\includegraphics[width=0.45\textwidth]{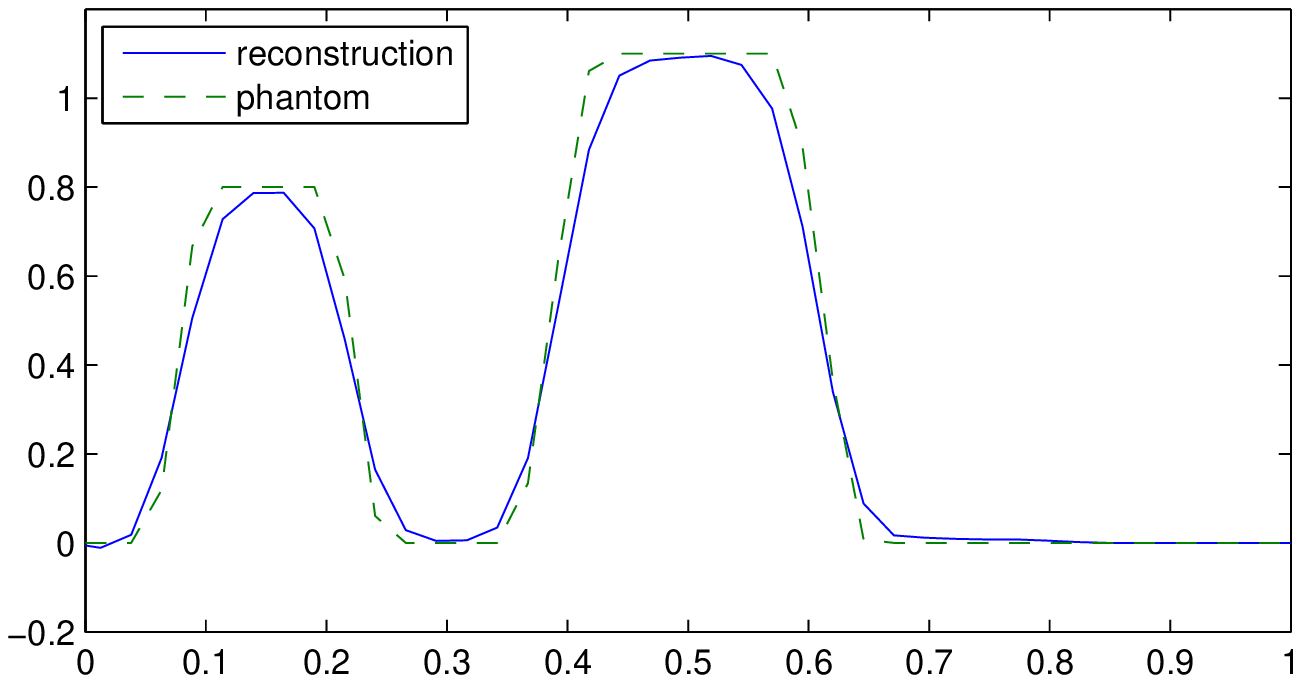}}
\subfigure[Slice of the reconstruction and phantom along the line $x_1=0,\;x_3=0.5$ from data with 20\% uniform random noise]{\includegraphics[width=0.45\textwidth]{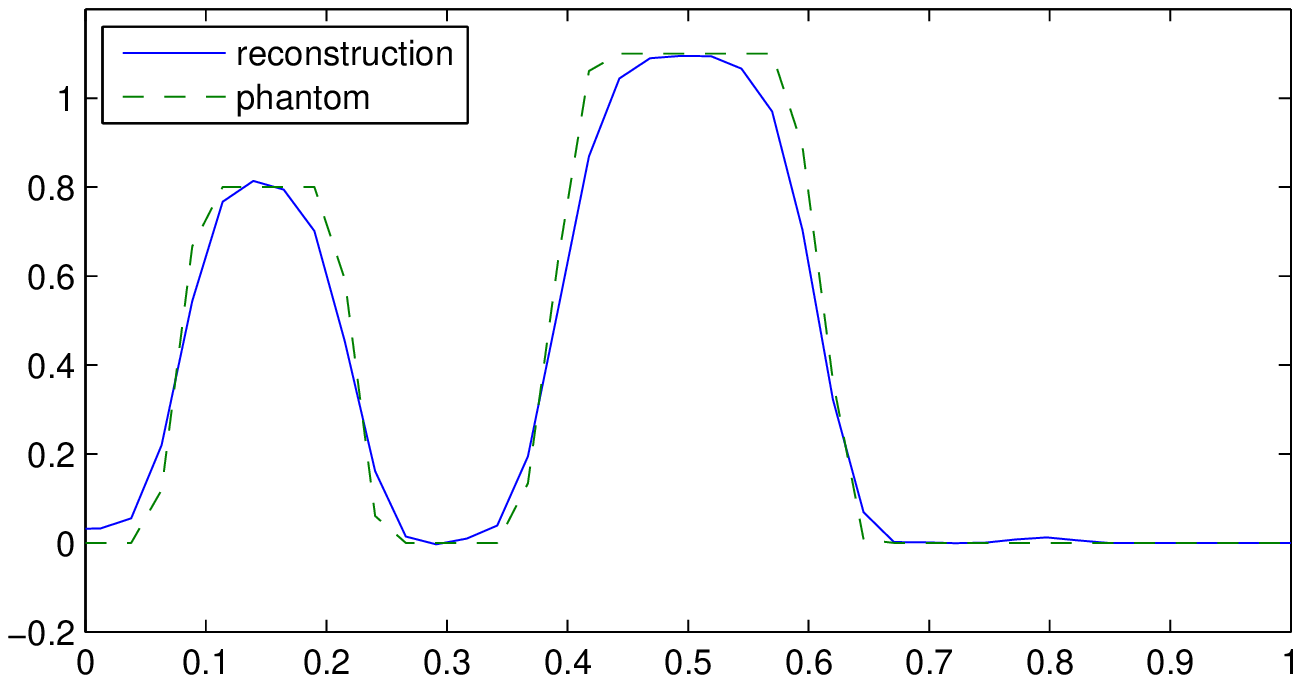}}
\end{center}

\caption{Reconstructions of a smooth phantom with centers $(0,-0.6,0.4)$, $(0,0,0.5)$, $(0,0.55,0.5)$,
and $(0,0.55,0.15)$, outer radii $0.2$, $0.3$, $0.15$, and $0.1$, inner radii $0.1$, $0.15$, $0.075$, and $0.05$, and central values 0.6, 1, 1.1, and 0.8, respectively}
\label{fig:4smoothballs}
\end{figure}

The Funk-Minkowski transform is inverted at each time step by using the inverse two-dimensional Radon transform
as prescribed by Theorem \ref{thm:funkradon}.  A direct inversion of $F$ using Theorem \ref{thm:funkradon}
would involve dividing by $\alpha_3^2$, which could cause instability near the $\alpha_1\alpha_2$-plane.
To avoid this, one possible remedy is to
cyclically permute $\theta_1$, $\theta_2$, and $\theta_3$ in (\ref{eq:FunkRadon})
and sum the results to obtain
\begin{equation*}
 2p(\uualpha,t)(\alpha_1^2+\alpha_2^2+\alpha_3^2)=2p(\uualpha,t).
\end{equation*}
In fact, we can further reduce the contribution from points near the $\alpha_i=0$ plane
(where $\alpha_i$ is the coordinate in the denominator of (\ref{eq:FunkRadon})
after a cyclic permutation of $\theta_1$, $\theta_2$, and $\theta_3$).
To achieve this we multiply the result of (\ref{eq:FunkRadon}) by $\alpha_i^k$ and 
sum the results:
\begin{equation*}
 2p(\uualpha,t)(\alpha_1^{k+2}+\alpha_2^{k+2}+\alpha_3^{k+2}).
\end{equation*}
The function $p(\uualpha,t)$ is then easily obtained by dividing by
$2(\alpha_1^{k+2}+\alpha_2^{k+2}+\alpha_3^{k+2})$.
In our reconstructions, we use $k=2$ in this formula.
To get the initial pressure field from $p(\uualpha,t)$, we use (\ref{eq:inversionofspherical}).


In our experiments, the function $p(\uutheta,t)$ is reconstructed on a spherical grid of 50 polar angles
and 200 azimuthal angles, for a grid of 50 values of $t$ on the interval $[0,2]$.
The polar angles range from $[\pi/25,\pi/2]$.
We start our polar grid slightly away from 0 because when we invert the Funk-Minkowski transform using Theorem \ref{thm:funkradon}, small values of the polar angle correspond to $s$ values near infinity, and having to account for large values of $s$ makes implementation of the inverse Radon transform difficult.
It is also not necessary to take polar angles
past $\pi/2$ because we are reconstructing the even part of $f$.
Our final reconstructions are calculated on an $80\times80\times80$ grid in $\vx$.

Figure \ref{fig:4sharpballs} shows the reconstruction of a phantom comprising four sharp balls of the form
(\ref{eq:sharpphantom}) from exact data.  A reconstruction of a smooth phantom comprising four functions of the form
(\ref{eq:smoothphantom}) from exact and noisy data is shown in Figure \ref{fig:4smoothballs}.

%
%

\section{Conclusion}\label{S:conclusions}
We studied a Radon-type transform arising in PAT with circular detectors and showed that it can be decomposed into the Minkowski-Funk transform and the spherical Radon transform. An inversion formula and range conditions were provided. In addition, numerical experiments demonstrated the feasibility of the reconstruction formula and showed good reconstructions from noisy data.
\section*{Acknowledgement}
S. Moon was supported by Basic Science Research Program through the National Research
Foundation of Korea (NRF) funded by the Ministry of Education, Science and Technology (NRF-
2012R1A1A1015116).



\nocite{kuchment12,gelfandgg03,palamodov98}
\bibliographystyle{plain}


 \appendices
\section{Proofs of Theorem \ref{thm:funkradon} and Lemmas \ref{lem:derivative} and \ref{lem:integral}}
\newtheorem*{thm:funkradon}{Theorem \ref{thm:funkradon}}
\begin{thm:funkradon}
Let $\phi$ be a continuous and even function on $S^2$, i.e., $\phi(\uutheta)=\phi(-\uutheta)$.
If 
$$
F\phi(\uutheta)=\intL_{\uutheta\cdot\uualpha=0}\phi(\uualpha)dS(\uualpha), \qquad\mbox{ for  } \uutheta\in S^2,
$$
then, if $\uutheta\neq(0,0,1)$ and $\uutheta\neq(0,0,-1)$, the following relation holds,
\begin{equation}\label{eq:relationbetweenradon}
F\phi(\uutheta)=\frac{1}{|\theta'|}R\Phi\left(\frac{\theta'}{|\theta'|},-\frac{\theta_3}{|\theta'|}\right),
\end{equation}
where $\uutheta=(\theta',\theta_3)=(\theta_1,\theta_2,\theta_3)\in S^2, \uualpha = (\alpha',\alpha_3)\in S^2$, $\Phi(\alpha'/\alpha_3)=2\phi(\uualpha)\alpha_3^2$, and  
$$
R[\Phi(x_1,x_2)](\uuomega,s)=\intR \Phi(s\uuomega+\nu \uuomega^\perp)d\nu,\quad\mbox{ for  } (\uuomega,s)\in S^1\times\RR.
$$
\end{thm:funkradon}
\begin{proof}
By definition, $F\phi$ can be written as
$$
F\phi(\uutheta)=\intL^{2\pi}_0\phi(\hat u\cos t+\hat v \sin t)dt,
$$
where 
$$
\hat u=(u_1,u_2,u_3) = \frac{1}{|\theta'|}(-\theta_2,\theta_1,0)\in S^2\;\mbox{and}\; \hat v =(v_1,v_1,v_3)=\frac{1}{|\theta'|}(\theta_1\theta_3,\theta_2\theta_3,-|\theta'|^2)\in S^2.$$
Note that $\hat u,\hat v$ and $\uutheta$ are orthonormal, so that $\hat{u} \cos t + \hat{v}\sin t, $ for $t\in[0,2\pi]$ is a parametrization of the great circle orthogonal to $\uutheta$.

By the evenness of $\phi$ and the definition of $\Phi$, we have
\begin{equation*}
\begin{split}
F\phi(\uutheta)&=\intL^{\pi}_0\Phi\left(\frac{u'\cos t+v' \sin t}{v_3\sin t}\right)\frac{dt}{v_3^2\sin^2 t}=\intL^{\pi}_0\Phi\left(\frac{u'}{v_3}\cot t+\frac{v' }{v_3}\right)\frac{dt}{v_3^2\sin^2 t}\\
&=\intR\Phi\left(\frac{u'}{v_3} t+\frac{v' }{v_3}\right)v_3^{-2} dt,
\end{split}
\end{equation*}
where in the last line, we changed the variables $\cot t\to t$.
Here $u'=(-\theta_2,\theta_1)/|\theta'|$ and $v'=\theta'/\sqrt{|\theta'|^2+|\theta'|^4/\theta_3^2}$.
Thus $F\phi(\uutheta)$ is the integral of $\Phi$ over the line with direction $u'/v_3$ and passing through $v'/v_3$.
We can represent $F\phi$ as
\begin{equation*}
\begin{split}
F\phi(\uutheta)&=\intR\Phi\left(u'\frac{t}{v_3} +\frac{v' }{|v' |}\frac{|v' |}{v_3}\right)v_3^{-2} dt=\intR\Phi\left(u't +\frac{v' }{|v'|}\frac{|v' |}{v_3}\right)|v_3|^{-1} dt\\
&= |v_3|^{-1} R\Phi\left(\frac{v' }{|v' |},\frac{|v' |}{v_3}\right),
\end{split}
\end{equation*}
where in the second line, we changed the variables $t/v_3\to t$.
The following identities complete our proof:
$$
\frac{v' }{|v' |}=\frac{\theta'}{|\theta'|},\quad \frac{|v' |}{v_3}=-\frac{\theta_3}{|\theta'|},\mbox{ and } |v_3|^{-1}=\frac1{|\theta'|}.
$$
\end{proof} 


\newtheorem*{lem:derivative}{Lemma \ref{lem:derivative}}
\begin{lem:derivative}
Let $\phi\in C^\infty([0,\epsilon])$ for some $\epsilon>0$ satisfy $\phi(0)=0$, and let $n$ be a positive integer.
For any positive integer $k\leq n$, we have
$$
\intL^t_0\phi(\tau) \partial_t^{k}\left(\frac{(t-\tau)^{n+1}}{t}\right)d\tau\to0\qquad\mbox{as}\quad t\to 0.
$$
\end{lem:derivative}

\begin{proof}
It is enough to show that for any positive integer $m$,
\begin{equation}\label{eq:lemlimit}
\intL^t_0\phi(\tau)\frac{(t-\tau)^{m'}}{t^m}d\tau\to0\qquad\mbox{as}\quad t\to 0,\qquad\mbox{for any} \quad m'\geq m
\end{equation}
since 
$$
\partial_t^{k}\left(\frac{(t-\tau)^{n+1}}{t}\right)=\sum^k_{j=0} c_{k,j}(t-\tau)^{n+1-j}t^{-k+j-1},
$$
where $c_{k,j}$ are some constants depending on $k$ and $j$.
To show \eqref{eq:lemlimit}, it is enough that for any positive integer $m$,
\begin{equation}\label{eq:lemlimit1}
\intL^t_0\phi(\tau)\frac{\tau^{m'}}{t^m}d\tau\to0\qquad\mbox{as}\quad t\to 0,\qquad\mbox{for any} \quad m'\geq m
\end{equation}
since 
$$
(t-\tau)^{m'}=\sum^{m'}_{j=0} \;_{m'}C_jt^j\tau^{m'-j},
$$
where $_{m'}C_j$ is the number of $j$-combination from a given set of $m'$ elements.
Applying L'H\^opital's rule to \eqref{eq:lemlimit1}, we have
$$
\frac{\phi(t)t^{m'}}{mt^{m-1}}=\phi(t)t^{m'-m+1}\to 0 \qquad\mbox{as}\quad t\to 0.
$$
\end{proof}

\newtheorem*{lem:integral}{Lemma \ref{lem:integral}}
\begin{lem:integral}
Let $\phi\in C^\infty([0,\epsilon])$ satisfy $\phi(0)=0$, and let $h(t)=\int_0^t\phi(\tau)d\tau/t$.
Then $h\in C^\infty([0,\epsilon])$ and $h(0)=0$.
\end{lem:integral}

\begin{proof}
The smoothness of $h$ is obvious except at $t=0$.
In order to prove smoothness at $t=0$, we will use mathematical induction.
First of all, $h(0)$ is equal to $\phi(0)$, which is equal to zero, by L'H\^opital's rule.
Suppose $h^{(n-1)}(0)=\phi^{(n-1)}(0)/n$ and $h^{(n-1)}(t)$ is continuous at 0.
Consider the Taylor expansion of $\int^t_0\phi(\tau)d\tau$ at $t=0$.
Then for any $n$, we have
\begin{equation*}
\begin{split}
\intL^t_0\phi(\tau)d\tau=&t\phi(0)+\frac {t^2}{2!}\phi'(0)+\frac {t^3}{3!}\phi^{(2)}(0)+\cdots \\
&+\frac {t^n}{n!}\phi^{(n-1)}(0)+\frac {t^{(n+1)}}{(n+1)!}\phi^{(n)}(0)+\intL^t_0\frac {\phi^{(n+1)}(\tau)}{(n+1)!}(t-\tau)^{n+1}d\tau,
\end{split}
\end{equation*}
where $\phi^{(n)}(0)$ is the $n$-th derivative of $\phi$ evaluated at $t=0$. 
Hence, we have
\begin{equation*}
\begin{split}
h (t)=&\frac1 t\intL^t_0\phi(\tau)d\tau\\
=&\phi(0)+\frac {t}{2!}\phi'(0)+\cdots+\frac {t^{n}}{(n+1)!}\phi^{(n)}(0)+\frac1t\intL^t_0\frac {\phi^{(n+1)}(\tau)}{(n+1)!}(t-\tau)^{n+1}d\tau.
\end{split}
\end{equation*}
For any $k\leq n$, the $k$-derivative of the last term (or the reminder term) is
\begin{equation}\label{eq:derivativeofreminder}
\begin{split}
\partial_t^k\intL^t_0\frac {\phi^{(n+1)}(\tau)}{(n+1)!}\frac{(t-\tau)^{n+1}}td\tau&=\partial_t^{k-1}\intL^t_0\frac {\phi^{(n+1)}(\tau)}{(n+1)!}\partial_t\left(\frac{(t-\tau)^{n+1}}t\right)d\tau\\
&\qquad\vdots\\
&=\intL^t_0\frac {\phi^{(n+1)}(\tau)}{(n+1)!}\partial_t^k\left(\frac{(t-\tau)^{n+1}}t\right)d\tau,
\end{split}
\end{equation}
since $\partial_t^{k'}((t-\tau)^{n+1}/t)=(t-\tau)^{n+1-k'}[(n+1)!/(n-k')!t^{-1}+\cdots+(-1)^{k'}k'!(t-\tau)^{k'}t^{-k'-1}]$ for $k'\leq n$. 
Hence, for $t$ near 0, we have
$$
h^{(n-1)}(t)=\frac 1{n}\phi^{(n-1)}(0)+\frac t{n+1}\phi^{(n)}(0)+\intL^t_0\frac {\phi^{(n+1)}(\tau)}{(n+1)!}\partial_t^{n-1}\left(\frac{(t-\tau)^{n+1}}t\right)d\tau
$$
and 
$$
h^{(n)}(t)=\frac 1{n+1}\phi^{(n)}(0)+\intL^t_0\frac {\phi^{(n+1)}(\tau)}{(n+1)!}\partial_t^{n}\left(\frac{(t-\tau)^{n+1}}t\right)d\tau.
$$
By Lemma~\ref{lem:derivative}, we have
$$
h^{(n)}(t)\to\frac 1{n+1}\phi^{(n)}(0)\qquad\mbox{as}\qquad t\to0.
$$
Hence we have
\begin{equation*}
\begin{split}
h^{(n)}(0)&=\displaystyle\lim_{t\to0}\frac1t\left(\frac t{n+1}\phi^{(n)}(0)+\intL^t_0\frac {\phi^{(n+1)}(\tau)}{(n+1)!}\partial_t^{n-1}\left(\frac{(t-\tau)^{n+1}}t\right)d\tau\right)\\
&=\displaystyle\frac {\phi^{(n)}(0)}{n+1}+\lim_{t\to0}\intL^t_0\frac {\phi^{(n+1)}(\tau)}{(n+1)!}\partial_t^{n}\left(\frac{(t-\tau)^{n+1}}t\right)d\tau=\frac{\phi^{(n)}(0)}{n+1},
\end{split}
\end{equation*}
where in the second line, we used L'H\^opital's rule, \eqref{eq:derivativeofreminder}, and Lemma \ref{lem:derivative}.
This means that $h$ is $n$-th differentiable at 0 and $h^{(n)}(t)$ is continuous at 0.
\end{proof}
\section{Verification of \eqref{eq:FBP}}
It is natural to apply an inversion formula for the regular Radon transform to obtain the inversion formula for the Minkowski-Funk transform when the relation \eqref{eq:relationbetweenradon} is given.
However, the filtered backprojection formula\footnote{In \cite[Thoerem 3.1 in Chapter 1]{helgason11}, the function $\Phi$ can be recovered from the Radon transform by the following inversion formula
$$
\Phi(x_1,x_2)=\frac{1}{4\pi}\intRR|(x_1,x_2)-(y_1,y_2)|^{-1}R^\# R\Phi(y_1,y_2)dy_1dy_2
$$
provided $\Phi(x_1,x_2)=O(|(x_1,x_2)|^{-N})$ for some $N>1$.
However, the filtered backprojection formula we want to apply is $\Phi(x_1,x_2)=(4\pi)^{-1}R^\# HR\Phi$.
}
 is typically stated for functions in the Schwartz space (see \cite{helgason99radon,helgason11,natterer01}).
For completeness, we verify here that the filtered backprojection formula can still be applied to (\ref{eq:FunkRadon2}).

From now on, we omit $t$, an unnecessary variable here.
From $\Phi(\alpha'/\alpha_3)=2\phi(\uualpha)\alpha_3^2$ for $\uualpha=(\alpha',\alpha_3)\in S^2$, we notice that
$$
\Phi(x_1,x_2)=\phi\left(\frac{(x_1,x_2,1)}{\sqrt{1+x^2_1+x_2^2}}\right)\frac1{1+x^2_1+x_2^2}
$$
 is a smooth and $L^2$-function, i.e.,
$$
\intL_{\RR^2}|\Phi(x_1,x_2)|^2dx_1dx_2<\infty.
$$
By the projection slice theorem, we have $\hat \Phi(\sigma\uuomega)=\widehat{R\Phi}(\uuomega,\sigma)$ for a Schwartz function $\Phi$, where
$\hat \Phi$ is the 2D Fourier transform of $\Phi$ and $\widehat{R\Phi}$ is the 1D Fourier transform of $R\Phi$.
By an approximation argument, the projection slice theorem still holds for an $L^2$-function $\Phi$. 
Since the Fourier transform is a unitary isomorphism on the $L^2$-space, the set of $L^2$-functions, we have
\begin{equation}\label{eq:inversionofradon}
\begin{array}{ll}
\Phi(x_1,x_2)&\displaystyle=\frac{1}{(2\pi)^2}\lim_{R\to\infty}\intL_{|(\xi_1,\xi_2)|<R}\widehat{R\Phi}((\xi_1,\xi_2)/|(\xi_1,\xi_2)|,|(\xi_1,\xi_2)|)e^{i(x_1,x_2)\cdot(\xi_1,\xi_2)}d\xi_1d\xi_2\\
&\displaystyle=\frac{1}{(2\pi)^2}\lim_{R\to\infty}\intL_{S^1}\intL_{\sigma<R}\widehat{R\Phi}(\uuomega,\sigma)|\sigma| e^{i\sigma(x_1,x_2)\cdot\uuomega}d\sigma d\uuomega\\
&\displaystyle=\frac{1}{(2\pi)^2}\lim_{R\to\infty}\intL_{S^1}\intL_{\sigma<R}i\operatorname{sgn}(\sigma)\widehat{\partial_sR\Phi}(\uuomega,\sigma)e^{i\sigma(x_1,x_2)\cdot\uuomega} d\sigma d\uuomega,
\end{array}
\end{equation}
where in the second line, we changed the variables $(\xi_1,\xi_2)\to(\sigma,\uuomega)\in[0,\infty)\times S^1$.
Since $F\phi$ is smooth and bounded, $R\Phi(\uuomega,s)$ is a smooth and $L^2$-function and so is $\partial_sR\Phi$.
We notice that Hilbert transform $Hu$ is also a unitary isomorphism on the $L^2$-space (by the Titchmarsh theorem~\cite[Chapter 5]{titchmarsh88}).
Therefore, \eqref{eq:inversionofradon} becomes (\ref{eq:FBP}).

\end{document}